      \theoremstyle{plain}
      \newtheorem{theorem}{Theorem}[section]
      \newtheorem{lemma}[theorem]{Lemma}
      \newtheorem{corollary}[theorem]{Corollary}
      \theoremstyle{definition}
      \newtheorem{definition}[theorem]{Definition}
      \theoremstyle{remark}
      \newtheorem{remark}[theorem]{Remark}
         \theoremstyle{problem}
      \newtheorem{problem}[theorem]{Problem}
         \theoremstyle{proposition}
      \newtheorem{proposition}[theorem]{Proposition}
     \theoremstyle{hypothesis}
      \def\@setcopyright{}
      \def\serieslogo@{}
  \DeclareMathOperator{\cdim}{cdim}
 \DeclareMathOperator{\CHull}{CHull}
\begin{document}

\title[Representation of Convex Geometries]{Representation of Convex Geometries\\ by circles on a Plane}
\author {K. Adaricheva}
\address{Department of Mathematics, School of Science and Technology, Nazarbayev University,
53 Kabanbay Batyr ave., Astana, 010000 Republic of Kazakhstan}
\email{kira.adaricheva@nu.edu.kz}

\author {M. Bolat}
\address{School of Science and Technology, Nazarbayev University,
53 Kabanbay Batyr ave., Astana, 010000 Republic of Kazakhstan}
\email{madina.bolat@nu.edu.kz}

\thanks{}, 
\keywords{Closure system, convex geometry, affine convex geometry, convex dimension}
\subjclass[2010]{05B25,05E99,06F99}

\date{\today}

\begin{abstract}
 \noindent  Convex geometries are closure systems satisfying the anti-exchange axiom. Every finite convex geometry can be embedded into a convex geometry of finitely many points in an n-dimensional space equipped with a convex hull operator, by the result of  K.~Kashiwabara, M.~Nakamura and Y.~Okamoto (2005). Allowing circles rather than points, as was suggested by G.~Cz\'edli (2014), may presumably reduce the dimension for representation. This paper introduces a property, the Weak $2\times 3$-Carousel rule, which is satisfied by all convex geometries of circles on a plane, and we show that it does not hold in all finite convex geometries. This raises a number of representation problems for convex geometries, which may allow us to better understand the properties of Euclidean space related to its dimension.
\end{abstract}

\maketitle

   \section{Introduction}
Convex geometries were studied from different perspectives and under different names since the 1930s. R.P.~Dilworth \cite{D2} knew them as lattices with unique irredundant decompositions, and B.~Monjardet mentions many ways the convex geometries were rediscovered before the mid-80s \cite{Mo85}. An important survey by P.H.~Edelman and R.E.~Jamison \cite{EdelJam} included results on several equivalent definitions of finite convex geometries and outlined a program for future studies, suggesting a list of open problems. 

Convex geometries are interesting combinatorial objects which generalize a notion of convexity in Euclidean space. There are many structures which share their properties. Among them are convex objects in Euclidean space, convex sets in posets, subsemilattices in a semilattice, and others that are considered in \cite{EdelJam}.  But the main driving example is a geometrical one: a set of points in Euclidean space equipped with a closure operator of the convex hull. This convex geometry nowadays is called \emph{affine} convex geometry. Among  important problems raised in \cite{EdelJam} was the following:

     \begin{problem}\label{prob1}
  Describe affine convex geometries, i.e., list the properties which ensure that a finite convex geometry can be represented by convex subsets of some finite point configuration in a finitely dimensional Euclidean space. 
   \end{problem}

A few partial results were obtained towards the solution of this problem, showing that the problem is rather difficult.
First, affine convex geometries, represented by simple configuration on a plane: a single point inside a $n$-gon - were described by P.H.~Edelman and D.G.~Larman \cite{EdLa}. Already the simplest generalization, two points inside a $n$-gon on a plane, became a much harder problem, as shown in K. Adaricheva \cite{Adar2007}. More recently, K.~Adaricheva and M.~Wild \cite{AdarWild} have shown that some modification of Problem \ref{prob1} is (polynomially) equivalent to the Order Type problem, which is NP-hard, the latter result following from the Universality Theorem of N.E. Mn\"ev \cite{Mn}.    

A new stage of studies of convex geometries started from the publication of K.~Adaricheva, V.~Gorbunov and V.~Tumanov \cite{AdarGT}, when important generalizations were made from the finite case to the infinite one.
In that paper some important results were proved with respect to the \emph{embedding} of finite geometries, treated as finite lattices, into larger lattices, which would inherit some properties one would like to see in the infinite version of convex geometries. In particular, one of the key results of the paper was the finding a \emph{universal class} for finite convex geometries, i.e. a specific class of convex geometries, which would accept any finite convex geometry as a \emph{sub-geometry}. That specific class comprises lattices of algebraic subsets of algebraic lattices, which are not necessarily finite. For that reason, the paper has also raised the problem of finding a universal class comprising only \emph{finite} convex geometries:
 
   \begin{problem}\label{prob2}
  Find a class $\mathcal{C}$ of \emph{finite} convex geometries, such that every finite convex geometry is a sub-geometry of some geometry in $\mathcal{C}$.
   \end{problem}

One natural candidate for such representation was also indicated in \cite{AdarGT}: the class of affine convex geometries. Two similar results were proved, by different methods, in F.~Wehrung and M.~Semenova \cite{WS04} and K.~Adaricheva \cite{Adar2004}, showing that the class of affine convex geometries is universal for finite convex geometries without cycles, a.k.a convex geometries which are \emph{lower bounded}, if they are  treated as lattices. Lower bounded lattices play an important role in lattice theory, especially in the study of free lattices, see monograph R.~Freese, J.~ Je\v{z}ek and J.B.~Nation \cite{FJN}. Finally, it was shown by K.~Kashiwabara, M.~Nakamura and Y.~Okamoto \cite{Kashetal}  that every finite convex geometry is a sub-geometry of some affine convex geometry in $\mathbb{R}^n$, solving Problem \ref{prob2} in the positive. 

The two problems above showcase the different types of representation for convex geometries, and the second one is a generalization of the first one. While the first one assumes the \emph{isomorphism} between lattices of closed sets of convex geometries, the second requires \emph{embedding}. We note that embedding constitutes the classical algebraic approach to representation problems, allowing a lot of flexibility. To distinguish between the two types of representations for convex geometries, we will call the representation by isomorphism a \emph{strong representation}, and by embedding a \emph{weak representation}.

To summarize, we say that the weak representation problem for finite convex geometries is now solved, with respect to at least two universal classes: finite affine convex geometries and lattices of algebraic subsets of algebraic lattices. With respect to the first universal class, the problem of finding the optimal dimension for representation clearly arises: what is the smallest dimension $n$ of space $\mathbb{R}^n$ such that a given convex geometry is a sub-geometry of affine geometry in $\mathbb{R}^n$? For example, construction in \cite{Kashetal} would require $n=O(|G|)$, where $G$ is a ground set of convex geometry. On the other hand, if geometry is without cycles, the construction in \cite{Adar2004} would require a smaller dimension: $n=O(log(|G|)$.

Recently, M.~Richter and L.~Rogers \cite{RichRog} established an upper bound for the dimension $n$ of the weak representation of convex geometry by an affine geometry: $n \leq  \min(|G|,\cdim(G))$. Here $\cdim{G}$ is a \emph{convex dimension} of a geometry, a parameter discussed in \cite{EdelJam}.

Generally, it would be interesting to characterize convex geometries that can be weakly represented by affine geometries in $\mathbb{R}^n$. The necessary condition for such representation, called the $n$-Carousel rule, was established in K.~Adaricheva \cite{Adaricheva}. It was also shown that it implies the $n$-Carath\'eodory property.

As far as the strong representation is concerned, a new idea was introduced by G.~Cz\'edli \cite{Czedli}, who suggested using circles rather than points in $\mathbb{R}^2$ for the (strong) representation of convex geometries of the convex dimension $2$. Apparently, circles provide much more flexibility for the strong representation of convex geometries. Indeed, affine convex geometries are \emph{atomistic}, i.e., one-element subsets of the ground set of convex geometry are always convex. This would restrict the possibility of the strong representation by affine geometries to \emph{atomistic} convex geometries only. Replacing points by circles (or, more commonly, by balls in $\mathbb{R}^n$) removes such restriction for the strong representation.  

Taking the idea even further, M.~Richter and L.~Rogers in \cite{RichRog} showed how to use \emph{polygons} for the strong representation of convex geometries on the plane. In a sense, this is a nice visualization of the Theorem in \cite{EdelJam} about compatible orderings of a convex geometry and its parameter $\cdim$. We note though that polygons that appear in this result are not necessarily convex themselves. Also, the relative position of such polygons is very restricted, unlike the case with representation by circles.

In our paper we tackled the problem of representation by circles on a plane, discussed first in \cite{CzedliComm}. It was natural to venture, after the result of \cite{Czedli}, whether arbitrary convex geometries, in particular, those with $\cdim > 2$, could be strongly represented by circles on a plane. 

In this paper, we answer this question \emph{in negative}, demonstrating an example of convex geometry with $\cdim = 6$, which cannot  have such a representation.

Moreover, we prove in Theorem \ref{theorem2} that all convex geometries having strong representation by circles on a plane satisfy the property which we call the \emph{Weak $2\times 3$-Carousel rule}. Indeed, this property is a weakening of the $2$-Carousel rule, first introduced in \cite{AdarWild}. 

Naturally, this brings a question generalizing Problem \ref{prob1}, which was first formulated in \cite{CzedliComm}:

\begin{problem}
Is every finite convex geometry (strongly) represented by a convex geometry of balls in $\mathbb{R}^n$? 
\end{problem} 

The paper is organized as follows: In section \ref{Definitions} we introduce the main concepts and definitions, and provide several statements in the background of our investigation. 

 In Section \ref{WeakCarouselTriangles} we give the outline of the proof of the main Theorem and prove the Weak Carousel property for triangles, a partial case of the Weak $2\times 3$-Carousel rule in the geometry of circles on a plane. The proof of the Weak Carousel property for triangles requires a number of geometrical results which are stated and proved in Section \ref{Lemmas}. 

The appendices keep the illustrations of realizable cases of the location of two circles inside a triangle. 
The main result, the Weak $2\times 3$-Carousel rule for circles, is proved in Section \ref{sectionWCP}. Examples of affine convex geometry that fail the Weak $2\times 3$-Carousel rule is described in Section \ref{Example}.
We discuss the results and propose the problems for future study in Section \ref{CR}.
     
\section {Background} \label{Definitions}

In this section we introduce all necessary definitions and formulate basic statements.
The terminology follows \cite{EdelJam}.

\begin{definition}
Given any set $X$, a closure operator on $X$ is a mapping $\varphi: 2^{X}\rightarrow2^X$ with the following properties:
\begin{enumerate}
\item $Y\subseteq\varphi (Y)$ for every $Y\subseteq X$;
\item If $Y\subseteq Z$, then $\varphi(Y)\subseteq \varphi (Z)$ for $Y,Z \subseteq X$;
\item $\varphi(\varphi(Y))=\varphi(Y)$ for $Y\subseteq X$.
   \end{enumerate}
   \end{definition}
 
Set $X$ will be called a \emph{ground set}, or a \emph{base set} for a closure system, the latter being defined as a pair $(X, \varphi)$. We can also associate the closure system with a special family of subsets called an alignment.

\begin{definition}\label{Alignment}
Given any (finite) set $X$, an \textit{alignment} on $X$ is a family $\mathcal{F}$ of subsets of $X$ which satisfies two properties:
\begin{enumerate}
\item $X\in \mathcal{F}$;
\item If $Y, Z \in \mathcal{F}$, then $Y \cap Z \in \mathcal{F}$.
  \end{enumerate}
  \end{definition}
 Note that the definition of alignment requires a slight modification, when the ground set is not assumed to be finite. We will consider only finite ground sets within the current paper.
 
It is well-known that any alignment on a finite ground set forms a lattice, where the meet operation $\wedge$ is the set intersection $\cap$, and the join operation $\vee$ is defined as follows: $Y\vee Z = \cap \{W \in \mathcal{F}: Y, Z \subseteq W\}$.

The following relationships between a closure operator and an alignment could be easily verified.
  
  \begin{proposition}
Let $X$ be some finite ground set.
\begin{enumerate}
\item If $\varphi$ is a closure operator on $X$, then $\mathcal{F}=\{ Y\subseteq X:\varphi(Y)=Y\}$ is an alignment on $X$. 
\item Let $\mathcal{F}$ be an alignment on $X$. Define $\varphi (Y)=\cap\{Z\in \mathcal{F}: Y\subseteq Z\}$ for every $Y\subseteq X$. Then, $\varphi$ is a closure operator on $X$.
\item The correspondences between a closure operator and an alignment on $X$ in items (1) and (2) are inverses of each other. 
\end{enumerate}
 \end{proposition}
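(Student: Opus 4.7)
The plan is to verify each of the three parts in turn; the proof is essentially bookkeeping with the definitions, so I do not anticipate a genuine obstacle. The one subtle point is the reliance on finiteness of $X$: I use it to pass from closure under pairwise intersections in Definition \ref{Alignment}(2) to closure under arbitrary finite intersections, via a straightforward induction.

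For part (1), I first check $X \in \mathcal{F}$: property (1) of a closure operator gives $X \subseteq \varphi(X)$, while $\varphi(X) \subseteq X$ by the codomain of $\varphi$, so $\varphi(X) = X$. For intersections, given $Y, Z \in \mathcal{F}$, monotonicity yields $\varphi(Y \cap Z) \subseteq \varphi(Y) \cap \varphi(Z) = Y \cap Z$, and the reverse inclusion is extensivity. Hence $\varphi(Y \cap Z) = Y \cap Z$, so $Y \cap Z \in \mathcal{F}$.

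For part (2), I verify the three closure axioms. Extensivity is immediate since $Y$ is contained in every set being intersected to form $\varphi(Y)$. Monotonicity follows because enlarging $Y$ only shrinks the collection $\{Z \in \mathcal{F} : Y \subseteq Z\}$ over which we intersect. The crux is idempotence: I argue that $\varphi(Y) \in \mathcal{F}$, using that the collection $\{Z \in \mathcal{F} : Y \subseteq Z\}$ is nonempty (it contains $X$) and finite (as $X$ is finite), so its intersection lies in $\mathcal{F}$ by induction on Definition \ref{Alignment}(2). Once this is established, $\varphi(\varphi(Y)) = \varphi(Y)$ follows because $\varphi(Y)$ now appears among the sets intersected to define $\varphi(\varphi(Y))$.

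For part (3), I denote by $\Phi$ the assignment $\varphi \mapsto \mathcal{F}_\varphi$ from (1) and by $\Psi$ the assignment $\mathcal{F} \mapsto \varphi_\mathcal{F}$ from (2), and verify that $\Phi \circ \Psi$ and $\Psi \circ \Phi$ are identities. Starting from a closure operator $\varphi$, the operator produced by the round trip sends $Y$ to $\bigcap\{Z : \varphi(Z) = Z,\, Y \subseteq Z\}$; by idempotence and extensivity of $\varphi$, the set $\varphi(Y)$ itself is a fixed point of $\varphi$ containing $Y$, and by monotonicity it is contained in every other such $Z$, so the intersection equals $\varphi(Y)$. Starting from an alignment $\mathcal{F}$, any $Y \in \mathcal{F}$ trivially satisfies $\varphi_\mathcal{F}(Y) = Y$; conversely, any fixed point of $\varphi_\mathcal{F}$ is an intersection of members of $\mathcal{F}$ and therefore belongs to $\mathcal{F}$ by the same finite-intersection induction used in part (2). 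This closes both directions and completes the correspondence.
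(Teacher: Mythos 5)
Your proof is correct; the paper itself omits any argument, stating only that the correspondences ``could be easily verified,'' and your verification is the standard one. The one point worth making explicit --- that the finiteness of $X$ is used (via induction on pairwise intersections) to show $\varphi(Y)\in\mathcal{F}$ in part (2) and to identify the fixed points of $\varphi_{\mathcal{F}}$ with the members of $\mathcal{F}$ in part (3) --- is handled properly, and is consistent with the paper's remark after Definition \ref{Alignment} that the infinite case would require a modified definition.
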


\emph{An implication} $Y\rightarrow Z$ of a closure system $(X,\varphi)$ is a statement that $Z\subseteq \phi(Y)$. A set of such implications is called \emph{an implicational basis}, if any implication that holds in 
$(X,\varphi)$ is a logical consequence of the basis. The study of implicational bases of closure systems, and convex geometries in particular is quite active, see K. Adaricheva and J.B. Nation \cite{AN16} and M. Wild \cite{Wi16}.

We turn now to special properties of a closure operator or alignment, which distinguish convex geometries.
 
 \begin{definition}
Closure system $(X,\varphi)$ is called a \emph{convex geometry} if $\varphi$ is a closure operator on $X$ with additional properties:
\begin{enumerate}
\item $\varphi(\emptyset)=\emptyset$;
\item if $Y=\varphi(Y)$ and $x, z\notin Y$, then $z\in\varphi(Y\cup x)$ implies that $x\notin\varphi(Y\cup z)$ (Anti-exchange property).
\end{enumerate}
\end{definition}

Convex geometries could be defined equivalently through an alignment.

    \begin{definition}
 Pair $(X,\mathcal{F})$ is called a \textit{convex geometry} if $\mathcal{F}$ is an alignment on $X$ with additional properties:
 \begin{enumerate}
\item $\emptyset\in \mathcal{F}$;
\item if $Y\in \mathcal{F}$ and $Y \neq X$, then $\exists a\in X\setminus Y$ s.t. $Y\cup\{a\}\in \mathcal{F}$. 
  \end{enumerate}
  \end{definition}

Theorem 2.1 in \cite{EdelJam} establishes, among other statements, that two above definitions of convex geometry are equivalent.

There is a simple type of alignments whose elements form a chain.

\begin{definition}
An alignment $\mathcal{F}$ defined on a set $X$ is called a \emph{monotone} alignment if there is a total ordering $x_1<x_2<\dots<x_n$ such that $\{x_1, x_2, \dots, x_i\}\in\mathcal{F}$ for all $i, 1\leq i \leq n$, and these sets are the only elements of $\mathcal{F}$.
\end{definition}
 It is straightforward to verify that pair  $(X,\mathcal{F})$ is always a convex geometry, for any monotone alignment $\mathcal{F}$ on $X$.

Given two alignments $\mathcal{F}_1$ and $\mathcal{F}_2$ defined on the same base set $X$, an operation of join on these alignments is defined as follows 
:
\[
\mathcal{F}_1 \vee \mathcal{F}_2=\{S\subseteq X: S=U\cup V \text{ for } U\in \mathcal{F}_1 \text{ and } V\in \mathcal{F}_2\}
\]

The following result was proved as Theorem 5.1 in \cite{EdelJam}.

\begin{theorem}
If $(X,\mathcal{F}_1)$ and $(X,\mathcal{F}_2)$ are convex geometries, then $(X,\mathcal{F}_1\vee \mathcal{F}_2)$ is a convex geometry.  
\end{theorem}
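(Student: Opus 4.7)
The plan is to check both clauses of the convex-geometry definition for $\mathcal{F} := \mathcal{F}_1 \vee \mathcal{F}_2$. The boundary cases are immediate: since each $(X, \mathcal{F}_i)$ is a convex geometry and hence an alignment containing $\emptyset$, we get $\emptyset = \emptyset \cup \emptyset \in \mathcal{F}$ and $X = X \cup X \in \mathcal{F}$.

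The main work lies in closure under intersection. Take $S_i = U_i \cup V_i$ for $i = 1, 2$ with $U_i \in \mathcal{F}_1$ and $V_i \in \mathcal{F}_2$. Distributing the intersection over the union gives
\[
S_1 \cap S_2 = (U_1 \cap U_2) \cup (U_1 \cap V_2) \cup (V_1 \cap U_2) \cup (V_1 \cap V_2).
\]
The diagonal pieces $U_1 \cap U_2 \in \mathcal{F}_1$ and $V_1 \cap V_2 \in \mathcal{F}_2$ are handed to us by intersection-closure in each factor, but the cross terms $U_1 \cap V_2$ and $V_1 \cap U_2$ must be absorbed into a single pair $U^\ast \in \mathcal{F}_1$, $V^\ast \in \mathcal{F}_2$ whose union is exactly $S_1 \cap S_2$. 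My approach is to start from the first approximation $(U_1 \cap U_2, V_1 \cap V_2)$ and iteratively enlarge using the extension property of each factor restricted to $S_1 \cap S_2$: any element of a cross term that is not yet covered can be pulled in by extending the appropriate $\mathcal{F}_i$-factor within the ambient set $S_1 \cap S_2$, and the procedure terminates once $U^\ast \cup V^\ast = S_1 \cap S_2$. This is the step I expect to be the main obstacle, and it is where the anti-exchange axiom of each factor should be doing the real work.

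For the extension property, suppose $S = U \cup V \in \mathcal{F}$ with $S \neq X$. Then at least one of $U, V$ is properly contained in $X$; say $U \neq X$. By the extension property of $(X, \mathcal{F}_1)$ there is some $a \in X \setminus U$ with $U \cup \{a\} \in \mathcal{F}_1$. If $a \notin V$, we are done since $S \cup \{a\} = (U \cup \{a\}) \cup V \in \mathcal{F}$. Otherwise $a \in V \setminus U$, and either we iterate the extension in $\mathcal{F}_1$ beyond $V$, or we switch sides and seek an extension of $V$ in $\mathcal{F}_2$ landing outside $S$; since $X \setminus S$ is nonempty and each factor is a convex geometry, a short case analysis terminates with an element $a \in X \setminus S$ producing the desired extension. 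Thus the genuinely hard content is the intersection-closure step, while the extension property reduces cleanly to the extension properties of the two factors.
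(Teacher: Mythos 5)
The step you yourself flag as the main obstacle --- closure of $\mathcal{F}_1\vee\mathcal{F}_2$ under intersection --- is a genuine gap, and it cannot be filled for the operation as the paper prints it. Your plan to ``iteratively enlarge'' the pair $(U_1\cap U_2,\,V_1\cap V_2)$ using the extension property ``restricted to $S_1\cap S_2$'' is never made into an argument: the extension property of $\mathcal{F}_i$ produces one-point extensions by elements of $X$, not of $S_1\cap S_2$, and nothing forces the process to absorb the cross terms $U_1\cap V_2$ and $V_1\cap U_2$ while staying inside $S_1\cap S_2$. In fact no such argument exists. Take $X=\{1,2,3\}$, $\mathcal{F}_1=\{\emptyset,\{1\},\{1,2\},X\}$ and $\mathcal{F}_2=\{\emptyset,\{3\},\{2,3\},X\}$; both are convex geometries (the monotone alignments of the orders $1<2<3$ and $3<2<1$). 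Then $\{1,2\}$ and $\{2,3\}$ lie in $\mathcal{F}_1\vee\mathcal{F}_2$, but their intersection $\{2\}$ does not: the only member of either family contained in $\{2\}$ is $\emptyset$, so $\{2\}$ is not of the form $U\cup V$. Hence, with the union-based definition displayed in the paper, $\mathcal{F}_1\vee\mathcal{F}_2$ is not even an alignment and the theorem is false as literally stated.

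The explanation is that the paper offers no proof here --- it quotes Theorem 5.1 of Edelman and Jamison --- and in that source the join of two alignments is $\mathcal{F}_1\vee\mathcal{F}_2=\{U\cap V: U\in\mathcal{F}_1,\ V\in\mathcal{F}_2\}$, the smallest alignment containing both; the $\cup$ in the paper's display is evidently a typo. (This is also forced by Theorem 2.9 and the definition of $\cdim$: for three collinear points, $\{2\}$ arises as $\{1,2\}\cap\{2,3\}$ from two compatible orders, but can never be a nontrivial union of initial segments, so the union version would make $\cdim$ undefined there.) With $\cap$ in place, the difficulty profile is the reverse of what you describe: closure under intersection is the one-line identity $(U_1\cap V_1)\cap(U_2\cap V_2)=(U_1\cap U_2)\cap(V_1\cap V_2)$, and the substantive content is the extension property --- equivalently, anti-exchange for the operator $Y\mapsto\varphi_1(Y)\cap\varphi_2(Y)$ --- which is precisely the part you dismissed as reducing ``cleanly'' to the two factors. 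Your termination argument for the extension property of the union operation is essentially workable on its own terms, but it is aimed at the wrong operation.
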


It turns out that an alignment of any convex geometry could be viewed as a join of several monotone alignments \cite[Theorem 5.2]{EdelJam}:

\begin{theorem}
Given any convex geometry $G=(X,\mathcal{F})$, $\mathcal{F}=\bigvee_{i \leq n} \mathcal{L}_i$, for some $n \in \mathbb{N}$, where $\mathcal{L}_i$ is a monotone alignment defined on $X$, for every $i\leq n$.
\end{theorem}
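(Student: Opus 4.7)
The plan is to attach to every closed set $F\in\mathcal F$ a compatible ordering of $X$ (a linear ordering whose every initial segment is closed) having $F$ itself as one of its initial segments, and then take the monotone alignments induced by these orderings as the $\mathcal L_i$.

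The heart of the argument is the shelling step: each $F\in\mathcal F$ admits an enumeration $x_1,\dots,x_k$ of its elements such that every prefix $\{x_1,\dots,x_j\}$ belongs to $\mathcal F$. To see this I would first check that the interval $[\emptyset,F]$ is itself a convex geometry on ground set $F$: intersection closure and $\emptyset\in[\emptyset,F]$ are immediate, and property (2) inside $F$ follows from the fact that for any covering $G\prec H$ in $\mathcal F$ one has $H=G\cup\{a\}$ for a uniquely determined $a\in X$, since for two distinct candidate elements $a,b\in H\setminus G$ the closures $\varphi(G\cup\{a\})$ and $\varphi(G\cup\{b\})$ both coincide with $H$ by the covering assumption, and then the anti-exchange axiom applied to $a,b\notin G$ yields a contradiction. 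Picking a maximal chain of closed sets from $\emptyset$ to $F$ and reading off the new element added at each covering step produces the desired enumeration of $F$. I then append an enumeration of $X\setminus F$ by iterating property (2) of the alignment definition on $(X,\mathcal F)$ starting from $F$; the concatenation is a compatible ordering of $X$ whose prefix of length $|F|$ equals $F$.

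For each $F\in\mathcal F$ let $\mathcal L_F$ denote the monotone alignment of the compatible ordering just constructed; by design $F\in\mathcal L_F\subseteq\mathcal F$. Enumerating $\mathcal F=\{F_1,\dots,F_n\}$ and setting $\mathcal L_i:=\mathcal L_{F_i}$, every $F_i$ is witnessed inside $\mathcal L_i$ and hence lies in $\bigvee_{j\le n}\mathcal L_j$, so $\mathcal F\subseteq\bigvee_{j\le n}\mathcal L_j$. For the opposite containment, each $\mathcal L_j$ is contained in the alignment $\mathcal F$, so the join (taken in the lattice of alignments on $X$) is still contained in $\mathcal F$. The principal obstacle is the shelling step, and in particular the covering fact $H=G\cup\{a\}$ whose proof rests on the anti-exchange axiom; once that is pinned down, the assembly of the compatible orderings into monotone alignments and the verification of the join identity are straightforward bookkeeping.
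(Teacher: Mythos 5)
The paper does not actually prove this statement --- it is quoted from Edelman and Jamison \cite[Theorem 5.2]{EdelJam} --- so there is no in-paper argument to compare against. Your proof is essentially the standard one: shell each closed set $F$ along a maximal chain of $[\emptyset,F]$, using anti-exchange to see that each covering in $\mathcal{F}$ adds exactly one element, extend to a compatible ordering of $X$ by the one-point-extension property, and join the resulting monotone alignments. The covering argument via anti-exchange is exactly right, and the bookkeeping at the end is fine.

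The one point you must pin down is \emph{which} join you are using, because your final containment is valid for only one of the two candidates. You take the join ``in the lattice of alignments,'' i.e.\ the smallest intersection-closed family containing all the $\mathcal{L}_i$, which equals $\{\bigcap_i L_i : L_i\in\mathcal{L}_i\}$; for that operation both inclusions are immediate ($F_i=F_i\cap X\cap\dots\cap X$ gives one, and $\mathcal{F}$ is an upper bound for the $\mathcal{L}_i$ in that lattice, hence contains their least upper bound). That is the operation under which the theorem is true, and it is what the cited source uses. The present paper, however, displays the join as $\mathcal{F}_1\vee\mathcal{F}_2=\{U\cup V: U\in\mathcal{F}_1,\ V\in\mathcal{F}_2\}$, and under that reading your last step breaks: $\mathcal{F}$ is intersection-closed but not union-closed, so $\mathcal{L}_j\subseteq\mathcal{F}$ for all $j$ does not give $\bigvee_j\mathcal{L}_j\subseteq\mathcal{F}$. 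Indeed the union version of the statement is false: for three collinear points $a,b,c$ with $b$ between $a$ and $c$, producing the closed singletons $\{a\}$ and $\{c\}$ as unions forces $\{a\}\in\mathcal{L}_i$ and $\{c\}\in\mathcal{L}_j$ for some $i,j$, whence the non-closed set $\{a,c\}=\{a\}\cup\{c\}$ also lies in the union-join. So your proof is complete once ``join'' is read as the lattice join (equivalently, pairwise intersections); read literally against the paper's displayed formula it would not go through. A side remark, irrelevant to correctness: you use one chain per closed set, so $n=|\mathcal{F}|$; one chain per meet-irreducible element of $\mathcal{F}$ already suffices, which is where the convex dimension invariant comes from.
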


As a consequence, it is of interest to define a parameter associated with convex geometry to represent a minimal number of monotone alignments needed to realize an alignment of convex geometry:

\begin{definition}\cite{EdelJam}
Given convex geometry $G=(X,\mathcal{F})$, convex dimension $\cdim$ of G  is a minimal number of monotone alignments needed to realize $\mathcal{F}$.
\end{definition}

The following example remains to be the main driving model of convex geometries.

\begin{definition}
\textit{An affine convex geometry} is a convex geometry $C_0(\mathbb{R}^n, X)=(X, ch)$, where $X$ is a set of points in $\mathbb{R}^n$ and $ch$ is closure operator of relative convex hull, which is defined as follows:
for $Y\subseteq X, ch(Y)=\CHull(Y)\cap X$, where $\CHull$ is a usual convex hull operator. 
\end{definition}

Affine convex geometries form a sub-class of atomistic closure systems.

\begin{definition}
A closure system $G=(X, \varphi)$ is called \textit{atomistic} if $\varphi(\{x\})=\{x\}$ for every $x\in X$. 
\end{definition}

Indeed, for any affine convex geometry $G=(X, ch)$, any $x\in X$ is a point in $\mathbb{R}^n$ and $ch(\{x\})=\{x\}$. 

The following generalization from points to circles was suggested in \cite{Czedli}. It can easily be generalized to the balls in $\mathbb{R}^n$, but we do not need this generalization in the current paper. Note that points are also considered as circles, whose radii are zero.

\begin{definition}\label{GeomCircles}
Consider closure system $F=(X, ch_{c})$, where $X$ is a set of circles in $\mathbb{R}^2$ and closure operator $ch_{c}$ is defined as follows:  $ch_{c}(Y)=\{z\in X: \tilde{z}\subseteq \CHull(\cup \tilde{y}: y\in Y)\}$ for $Y\subseteq X$, where $\CHull$ is a usual convex hull operator and $\tilde{x}$ is a set of points in $x \in X$. We call $F$ a \textit{geometry of circles on a plane}. 
\end{definition}

\begin{proposition}\cite{Czedli} For any finite set of circles $X$ on a plane, the closure system $F=(X, ch_{c})$ is a convex geometry.
\end{proposition}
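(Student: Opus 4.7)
The plan is to verify the three defining axioms of a convex geometry for $(X, ch_c)$ in order. First, I would check that $ch_c$ is a closure operator. Extensivity ($Y \subseteq ch_c(Y)$) and monotonicity are immediate from the corresponding properties of the ordinary convex hull. For idempotence, set $C := \CHull(\bigcup_{y \in Y}\tilde{y})$; every $z \in ch_c(Y)$ satisfies $\tilde{z} \subseteq C$ by definition, so $\bigcup_{z \in ch_c(Y)}\tilde{z} \subseteq C$, and hence the convex hull of this union is still contained in the convex set $C$, giving $ch_c(ch_c(Y)) \subseteq ch_c(Y)$. The identity $ch_c(\emptyset)=\emptyset$ is automatic since $\CHull(\emptyset)=\emptyset$ while each $\tilde{x}$ is non-empty. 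The real content is the anti-exchange property.

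Assume $Y = ch_c(Y)$, let $x,z$ be distinct elements of $X \setminus Y$, suppose $z \in ch_c(Y \cup \{x\})$, and set $C = \CHull(\bigcup_{y \in Y}\tilde{y})$. Then $\tilde{z} \subseteq \CHull(\tilde{x} \cup C)$, while closedness of $Y$ gives $\tilde{z}\not\subseteq C$ and $\tilde{x}\not\subseteq C$. I argue for a contradiction under the hypothesis $\tilde{x} \subseteq \CHull(\tilde{z} \cup C)$. The natural tool is the support function $h_A(v) := \max_{q\in A} v\cdot q$, recalling that $h_{\CHull(A\cup B)} = \max(h_A, h_B)$ and that for compact convex sets $A\subseteq B$ iff $h_A \le h_B$ pointwise. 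The two inclusions yield, for every $v \in S^1$,
\[
h_{\tilde{x}}(v)\le \max\bigl(h_{\tilde{z}}(v),\,h_C(v)\bigr) \quad\text{and}\quad h_{\tilde{z}}(v)\le \max\bigl(h_{\tilde{x}}(v),\,h_C(v)\bigr),
\]
from which a short case check shows that $h_{\tilde{x}}(v) > h_C(v)$ holds iff $h_{\tilde{z}}(v) > h_C(v)$ does, and on the non-empty open set $U \subset S^1$ where these inequalities hold one in fact has $h_{\tilde{x}}(v) = h_{\tilde{z}}(v)$.

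The final step uses the special shape of circles: for a disk with centre $c_x$ and radius $r_x$ one has $h_{\tilde{x}}(v) = c_x\cdot v + r_x$ on $v \in S^1$. The equality on $U$ therefore reads $(c_x - c_z)\cdot v = r_z - r_x$ for all $v \in U$, and since $U$ is open in $S^1$ this forces $c_x = c_z$ and $r_x = r_z$, i.e., $\tilde{x} = \tilde{z}$, contradicting the standing convention that distinct labels in $X$ represent distinct circles. I expect the main technical point of the argument to be precisely this passage from equality of support functions on an open arc of directions to the global coincidence of the two circles; it is the only step where the hypothesis that $X$ consists of circles, rather than of arbitrary compact convex bodies, is genuinely used, and it is exactly what would fail if one tried to replace circles by general ovals (where two distinct bodies can satisfy the mutual-containment inequalities without coinciding).
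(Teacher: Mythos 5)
Your proof is correct, but note that the paper itself gives no proof of this proposition: it is quoted from Cz\'edli's paper, so the comparison here is with the argument the paper alludes to rather than one it contains. The routine part (that $ch_c$ is a closure operator with $ch_c(\emptyset)=\emptyset$) you handle exactly as one must. For the anti-exchange property your route is genuinely different from Cz\'edli's. His mechanism, as the paper's concluding Proposition indicates, is that a circle is rigid in the sense that finitely many of its boundary points recover its parameters; one then argues with an extreme boundary point of $\CHull(\tilde z\cup\bigcup_{y\in Y}\tilde y)$ lying on $\tilde x$. You instead encode everything in support functions, turn the two inclusions $\tilde z\subseteq\CHull(\tilde x\cup C)$ and $\tilde x\subseteq\CHull(\tilde z\cup C)$ into the pointwise inequalities $h_{\tilde x}\le\max(h_{\tilde z},h_C)$ and $h_{\tilde z}\le\max(h_{\tilde x},h_C)$, and conclude $h_{\tilde x}=h_{\tilde z}$ on the nonempty open set $U=\{v: h_{\tilde x}(v)>h_C(v)\}$ (nonempty because $x\notin ch_c(Y)$, open by continuity of support functions; the degenerate case $Y=\emptyset$, $h_C\equiv-\infty$, goes through as well). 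The rigidity of circles then enters only through the identity $h_{\tilde x}(v)=c_x\cdot v+r_x$, and the fact that an affine function of $v$ constant on an arc of $S^1$ must have zero linear part forces $\tilde x=\tilde z$, contradicting distinctness. Each step checks out. What your packaging buys is a clean isolation of the one property of circles that matters: the support functions form a finite-dimensional family determined by their restriction to any open arc of directions. This immediately yields the paper's concluding generalization (e.g.\ to a family of ellipses, or to any smooth finitely-parametrized family of convex bodies recoverable from finitely many boundary points) and makes transparent why the statement fails for polygons or for arbitrary convex bodies, where two distinct bodies can share a boundary arc and hence satisfy your mutual inequalities without coinciding.
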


A geometry of circles on a plane is not atomistic in general. For convex geometry of circles $F=(X, ch_c)$, it is possible that $ch_c(\{x\})=\{x,y\}$ for $x, y \in X$ that describes a case when circle $y$ is inside circle $x$. 

Important concept for the current paper relates two geometries through the mapping of embedding.
    
     \begin{definition}\label{Subgeometry}
Convex geometry $G_1=(X,\mathcal{F}_1)$ is a sub-geometry of $G_2=(Y, \mathcal{F}_2)$ if there is a one-to-one map $f:\mathcal{F}_1\rightarrow \mathcal{F}_2$ s.t. 
\begin{enumerate}
\item $f(A\cap B)=f(A)\cap f(B)$, $A, B \subseteq X$;
\item $f(A\vee B)=f(A)\vee f(B)$, $A, B \subseteq X$.
 \end{enumerate}
 \end{definition}

Another way to connect these two geometries is to say that geometry $G_1$ has a \emph{weak representation} in $G_2$.

If map $f$ is also onto, then we talk about \emph{strong representation} of $G_1$ in $G_2$, or isomorphism of convex geometries. Such mapping also induces bijection $f_g$ between ground sets $X$ and $Y$, namely, $f_g(x)=y$ iff $f(\phi_1(\{x\})) = \phi_2(\{y\})$, where $\phi_1$ and $\phi_2$ are closure operators corresponding to alignments $\mathcal{F}_1$ and $\mathcal{F}_1$, respectively.

The following definition was introduced in \cite{AdarWild} and played a crucial role in \cite{Adaricheva}. It was shown in the latter paper that it implies the well-known \emph{$n$-Carath\'eodory property}, which describes essential behavior of the convex hull operator in the $n$-dimensional Euclidean space: if a point $x$ is in the convex hull of set of points $S$, then it is in the convex hull of some $n+1$ points from $S$.

   \begin{definition}\label{Carousel}
A convex geometry $(A, \varphi)$ satisfies the \textit{n-Carousel rule} if $x, y \in \varphi(S), \\S\subseteq A$, implies $x\in\varphi(\{y, a_1,\dots,a_n\})$ for some $a_1,\dots, a_n\in S$.
  \end{definition}
 
Note that $x$ and $y$ play symmetric roles in the statement, therefore, we simultaneously obtain that 
 $y\in\varphi(\{x, b_1,\dots, b_n\})$ for some $b_1,\dots, b_n\in S$.

In this paper we consider a slight variation of this property, replacing logical conjunction by the disjunction in the conclusion of the statement.
  
   \begin{definition}\label{CGWeakCarousel}
A convex geometry $(A, \varphi)$ satisfies the \textit{Weak $n$-Carousel rule} if $x, y \in \varphi(S), S\subseteq A$, implies either $x\in\varphi(\{y, a_1,\dots, a_n\})$ or $y\in\varphi(\{x, a_1, \dots, a_n\})$ for some $a_1,\dots, a_n\in S$.
  \end{definition}
   
If we set $n=2$ and $|S|=3$, then we can formulate a partial case of the Weak 2-Carousel rule as follows.  

    \begin{definition}\label{WeakCarousel}
A convex geometry $(A, \varphi)$ satisfies the \textit{Weak $2\times 3$-Carousel rule} if $x, y \in \varphi(S), S\subseteq A$ and $|S|=3$, implies either $x\in\varphi(\{y, a_1,a_2\})$ or $y\in\varphi(\{x, a_1,a_2\})$ for some $a_1,a_2\in S$.    

   \end{definition}
  
  To prove Weak $2\times 3$-Carousel rule for circles, we will first consider special case of the property, when set $S$ consists of points, a.k.a. circles of radii zero. This will result in the following target geometrical statement  in section \ref{WeakCarouselTriangles}.
  
\begin{definition}\label{WeakCarouselTr}
    A configuration of two circles $x$ and $y$ and a set $S$ of distinct points $A, B, C$ on a plane, not on one line, is said to satisfy the \textit{Weak Carousel Property for Triangles}, if 
$x,y \in ch_c(\{A,B,C\})$ implies $x\in ch_c(y, U, V)$ or $y\in ch_c(x, U, V)$, for some $U,V\in \{A,B,C\}$.
   \end{definition}

\section {The Weak Carousel Property for Triangles}\label{WeakCarouselTriangles}

In this section we prove that convex geometry of circles on a plane satisfies 
the Weak Carousel property for Triangles, which is a partial case of the Weak $2\times 3$-Carousel rule. 
In section \ref{sectionWCP} we will generalize it to show that the Weak $2\times 3$-Carousel rule also holds in this geometry.

We consider two circles inside a triangle and model their location by their projections on sides of this triangle. Thus, we transform a problem from considering positions of circles to looking at configurations of segments on the sides of triangle.

We find all possible configurations of segments which we reduce to smaller number of cases up to isomorphism. We dismiss some of these cases by proving that a location of circles with any of these projections is not realizable. For this, we prove a number of lemmas, which will be placed in section \ref{Lemmas}. We show that all other cases are realizable, and the Weak Carousel property for Triangles holds there. 

 \begin{theorem}\label{theorem1}
   Every configuration of two circles and a set of three distinct points in $\mathbb{R}^2$, not on one line, satisfies the Weak Carousel property for Triangles. 
   \end{theorem}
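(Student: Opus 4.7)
The plan is to reduce the statement to a finite case analysis driven by orthogonal projections onto the three sides of the triangle $ABC$. Since $x, y \in ch_c(\{A,B,C\})$, both disks $\tilde{x}$ and $\tilde{y}$ lie inside the triangle $T = \CHull(\{A,B,C\})$. The goal is then to exhibit one of the circles together with two vertices $U, V \in \{A,B,C\}$ such that $\tilde{x} \subseteq \CHull(\tilde{y} \cup \{U,V\})$ (or the symmetric statement with $x,y$ swapped). Geometrically, $\CHull(\tilde{y} \cup \{U,V\})$ is a \emph{stadium} --- the convex hull of a disk with a segment --- so the conclusion asks that one disk sit inside such a stadium built from the other.

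Next, I would exploit the fact that orthogonal projection onto a line commutes with taking convex hulls and sends a disk to a segment whose length equals its diameter. Projecting $\tilde{x}$ and $\tilde{y}$ onto each of $BC$, $CA$, $AB$ gives six segments. For each side, the relative arrangement of the two circle-projections (nested in either direction, disjoint, properly overlapping, or touching) together with their positions relative to the endpoints of that side encodes a finite combinatorial type. This converts the $2$-dimensional problem of locating two disks inside $T$ into a discrete problem about triples of segment configurations on the three sides.

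I would then enumerate joint configurations across the three sides, reducing by the symmetries of swapping $x \leftrightarrow y$ and permuting $A,B,C$. For each surviving configuration type the argument splits into two subquestions: first, whether the triple of per-side patterns is actually realizable by two disks inscribed in $T$; second, if yes, whether some choice of two vertices $U,V$ yields the required stadium containment. In a realizable case one expects to identify two sides on which the projection of $\tilde{x}$ is dominated by the projection of $\tilde{y}$ extended to the relevant vertices, and to deduce $\tilde{x} \subseteq \CHull(\tilde{y}\cup\{U,V\})$ from these two one-dimensional inequalities (since two non-parallel supporting directions suffice to pin down a convex region inside $T$).

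The main obstacle, I expect, is the non-realizability half of the analysis: showing that many prima facie consistent triples of per-side projection orderings cannot actually arise from two disks sitting inside a common triangle. Such obstructions must come from the interplay of tangent lines, supporting lines through the vertices, and the restricted mutual positions of two circles pinched between the edges of $T$; these are exactly the auxiliary geometric facts to be established in Section \ref{Lemmas}. Once the unrealizable combinations are pruned, each remaining case should directly exhibit the required pair $U,V$, and the theorem will follow by a case-by-case verification, with the realizable diagrams illustrated in the appendices referenced in the introduction.
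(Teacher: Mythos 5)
Your high-level architecture --- encode each circle by its shadow on each side of $\triangle ABC$, enumerate the finitely many joint configurations up to symmetry, prune the non-realizable ones by tangent-line obstructions, and verify the containment in the survivors --- is exactly the paper's strategy. But there are two concrete problems with the way you set it up. First, you use \emph{orthogonal} projections onto the sides, whereas the paper uses the \emph{central} projection from the opposite vertex: the segment on $BC$ cut out by the two tangent lines from $A$ to the circle. This is not a cosmetic difference. The conclusion you need is a containment of the form $\tilde{x}\subseteq \CHull(\tilde{y}\cup\{U,V\})$, whose boundary consists of arcs of $y$ and tangent segments through vertices; the vertex-projection data records precisely the ordering of these tangent lines along the sides, so comparing the two circles' vertex-projections translates directly into statements about cones $\CHull(\tilde{y}\cup\{U\})$. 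Orthogonal projections record the circles' extent in three fixed directions and carry no direct information about tangent lines from the vertices, so both the realizability obstructions and the final containment checks become much harder to extract from your combinatorial type.

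Second, and more seriously, your proposed verification step is invalid as stated: from the fact that the orthogonal projections of $\tilde{x}$ onto two non-parallel sides are dominated by those of the stadium $\CHull(\tilde{y}\cup\{U,V\})$ you cannot conclude $\tilde{x}\subseteq \CHull(\tilde{y}\cup\{U,V\})$. A convex body is the intersection of its supporting half-planes over \emph{all} directions; containment of projections in two directions only places $\tilde{x}$ in a parallelogram containing the stadium, not in the stadium itself. The paper's actual verification rests on a different and essential idea that your proposal does not supply: a homothety lemma showing that if a circle $y$ inside an angle with vertex $A$ meets the region between $A$ and a circle $p$ inscribed in that angle, then $\tilde{y}\subseteq \CHull(\tilde{p}\cup\{A\})$. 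This yields the dichotomy that $y$ cannot meet both of the two regions into which $x$ splits a cone (which drives all the non-realizability arguments), and, combined with a lemma locating the ``far'' region $w_N$ inside $\CHull(\tilde{x}\cup\{B\})$, it gives checkable sufficient conditions for $y\in ch_c(x,A,C)$ in the realizable cases. Without some substitute for this lemma, your case-by-case verification has no engine; you would also still need to treat the one configuration (projections of $y$ nested inside those of $x$ on all three sides) and the degenerate coincident-tangent cases separately, as the paper does.
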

Note: we will call the Weak Carousel property for Triangles within this and next sections simply the Weak Carousel property, to shorten the notation. 
\begin{proof}[Proof]
Consider two circles $x$ and $y$ in a triangle $\triangle{ABC}$. Circles $x$ and $y$ are projected to segments on sides of $\triangle{ABC}$. As in the example in Figure 1, we will be using notation, say, $x_B^{BC}$ and $x_C^{BC}$, for edges of a projection of a circle $x$ from vertex $A$ on $BC$ which are closest to $B$ and $C$, respectively.

\begin{figure}[h!]
\includegraphics[width=0.3\textwidth]{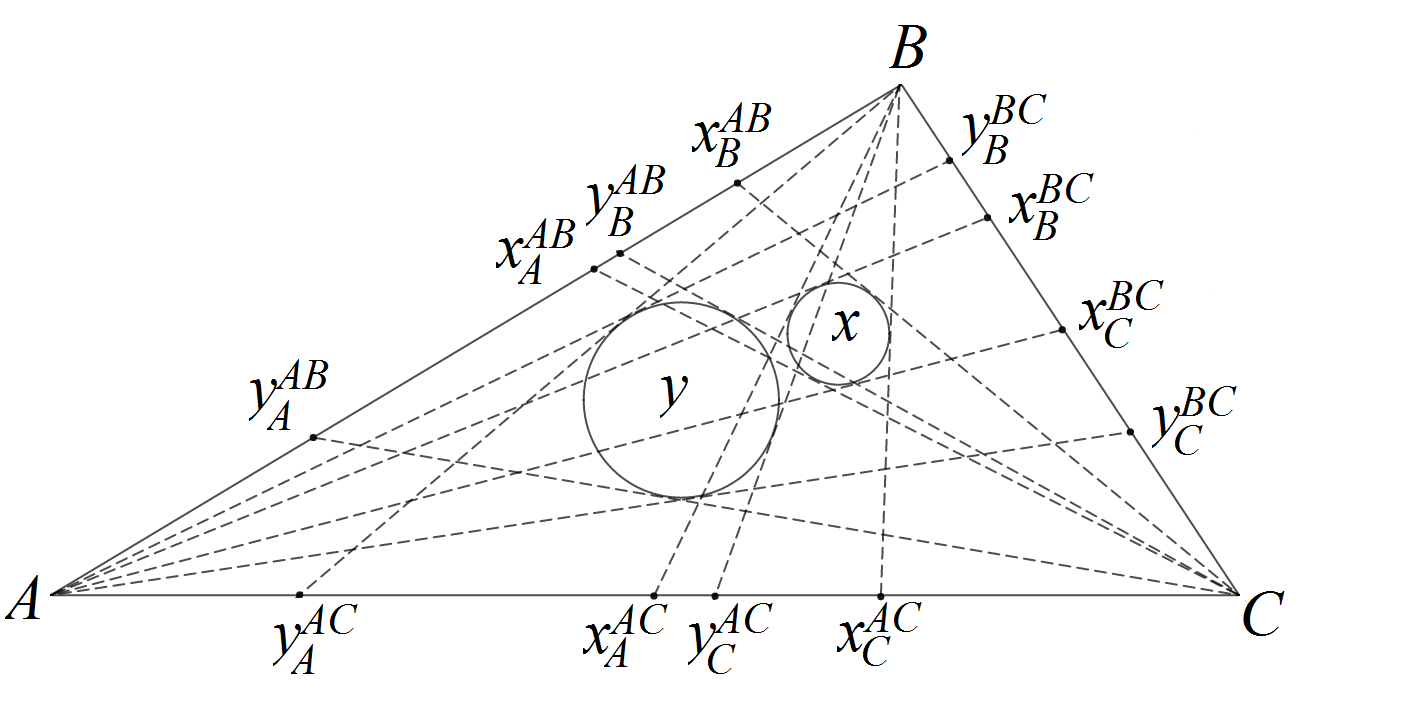}  
\caption{ }
  \label{fig:Circles}
\end{figure}
There are six possible configurations of projections of $x$ and $y$ on each side of a triangle, when \emph{all four end points on each side are distinct}. We will consider configurations which allow coincident end points later. 

We assign a number $i$, that takes integer values from $1$ to $6$, to each configuration of segments on one side of $\triangle{ABC}$. We illustrate configurations corresponding to values of $i$ in Figure \ref{fig:Segments} in Appendix \ref{App:AppendixA}. We assume that $x_1$ and $y_1$ in Figure \ref{fig:Segments} are first in orders $x_1x_2$ and in $y_1y_2$ respectively in a clockwise walk around $\triangle{ABC}$, assuming that $ABC$ is in a clockwise order. 

We denote a configuration of projections of two circles inside $\triangle{ABC}$ by $C_{jkl}$, where $j, k, l$ are integer numbers that take values of $i$ and denote a configuration of segments on sides $AB, BC, CA$, respectively. Say, example from Figure \ref{fig:Circles} is $C_{546}$. 

Six possible configurations for each side produce 216 possible configurations of projections of $x$ and $y$ for  $\triangle{ABC}$. These configurations could be grouped into 38 classes up to isomorphism. First, $C_{jkl}$, $C_{klj}$ and $C_{ljk}$ are isomorphic since they represent the same triangle with sides rotated clockwise.  Second, configurations for a side of $\triangle{ABC}$ assigned by $i=1$ and $i=2$ are isomorphic since they represent the same configurations with $x$ and $y$ swapped. Similarly, for $i=3$ and $4$, $i=5$ and $6$ (Figure \ref{fig:Segments}). Say, $C_{241}$ is isomorphic to $C_{124}$ and $C_{412}$ due to the first reason, and $C_{241}$ is isomorphic to $C_{132}$ due to the second reason. We denote classes by $S_n$, where $n$ takes values from $1$ to $38$. Say, if $C_{241}$ is in class $S_4$, then we include 1) $C_{124}$, $C_{412}$; 2) $C_{132}$; and 3) $C_{321}$, $C_{213}$, which are isomorphic cases of $C_{132}$. Refer to Appendix \ref{App:AppendixB} for a full list of grouping 216 configurations in 38 classes.

We start from showing that some of the classes of configurations cannot be realized by placing any two circles into a triangle.

\begin{enumerate}
\item Consider classes $S_2$, $S_9,S_{25},S_{38}$, where projections of circles $x$ and $y$ appear in the same order, on each side of triangle, when moving around $\triangle{ABC}$ clockwise. Say, projections of $y$ may appear ``strictly later'' than projections of $x$: when moving from $A$ to $B$ one would meet $x_A^{AB}$ before $y_A^{AB}$, and $x_B^{AB}$ before $y_B^{AB}$, similarly, when moving from $B$ to $C$ and from $C$ to $A$. 

Figure \ref{fig:DismissCase1} illustrates a configuration $C_{222}$ as an example from class $S_2$. Take tangent lines to $x$, connecting the first edge point of each $x$-projection, which we meet when we walk around $\triangle{ABC}$ clockwise, with the opposite vertex of $\triangle{ABC}$ (See Figure \ref{fig:Circles1}). Then, circle $x$ is inscribed into $\triangle{A'B'C'}$ formed by these three lines.  It follows from assumption that $y$ is inside $\triangle{A'B'C'}$. Moreover, $y$ should have points in each of three disjoint areas of $\triangle{A'B'C'}$, whose union is $\triangle{ABC}\backslash{x}$. Then due to Corollary \ref{corollary4}, the case is dismissed as impossible for realization.

Configurations from classes $S_9,S_{25},S_{38}$ are dismissed using similar argument. 

\begin{multicols}{2}
   \begin{figure}[H]
\includegraphics[width=0.3\textwidth]{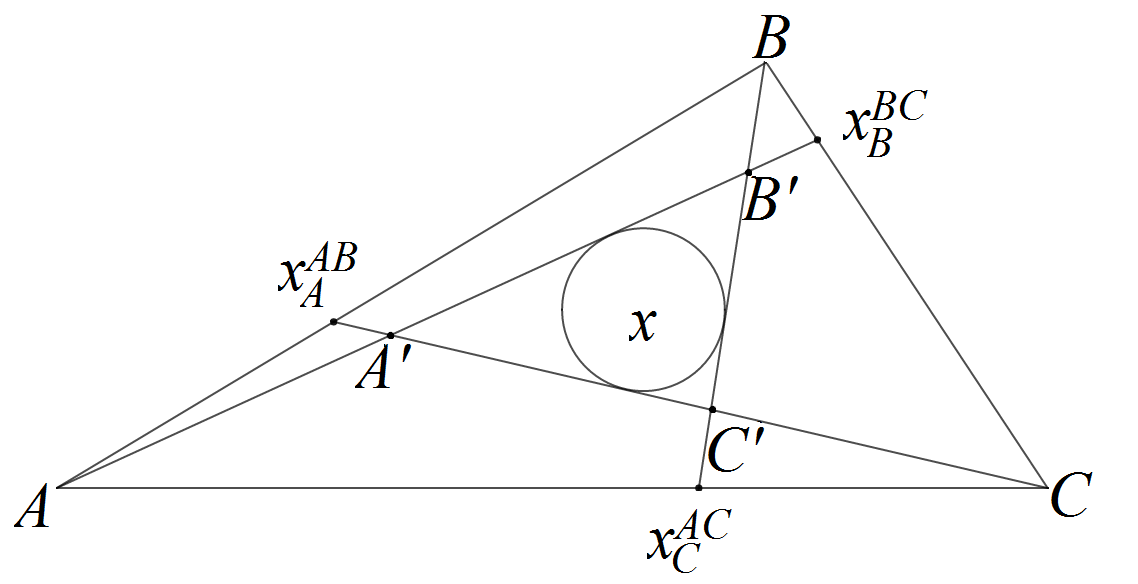}  
\caption{ }
  \label{fig:Circles1}
\end{figure}
\columnbreak
   \begin{figure}[H]
\includegraphics[width=0.3\textwidth]{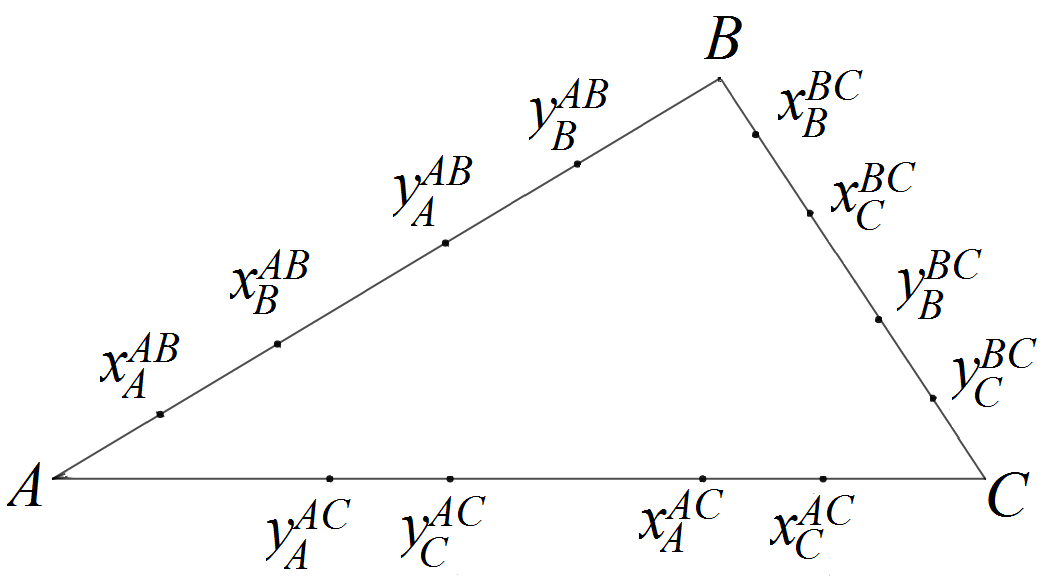}  
\caption{ }
  \label{fig:DismissCase1}
\end{figure}
\end{multicols}

\item Consider class $S_{13}$,where a segment of a projection of $y$ is
   inside a segment of a projection of $x$ on one side of
   $\triangle{ABC}$, but a projection of $x$ is inside a projection of
   $y$ on another side of $\triangle{ABC}$. Say, $y_B^{BC}y_C^{BC}$ is
   inside $x_B^{BC}x_C^{BC}$ and $x_A^{AC}x_C^{AC}$ is inside
   $y_A^{AC}y_C^{AC}$. Figure \ref{fig:DismissCase2} illustrates a
   configuration $C_{234}$ as an example from class $S_{13}$. Take
   tangent lines to $x$, connecting A with $x_B^{BC}$ and $x_C^{BC}$,
   see Figure \ref{fig:Circles2}. It follows from assumption that $y$
   is inside $\triangle{Ax_B^{BC}x_C^{BC}}$. Moreover, $y$ should have
   points in each of two disjoint areas of
   $\triangle{Ax_B^{BC}x_C^{BC}}$ whose union is $\triangle {Ax_B^{BC}x_C^{BC}}\backslash{x}$. Then due to Corollary \ref{corollary4}, the case is dismissed as impossible for realization.
   Configurations from classes $S_5,S_{12},S_{28},S_{31},S_{32}$ are dismissed using similar argument.   

\begin{multicols}{2}
 \begin{figure}[H]
\includegraphics[width=0.3\textwidth]{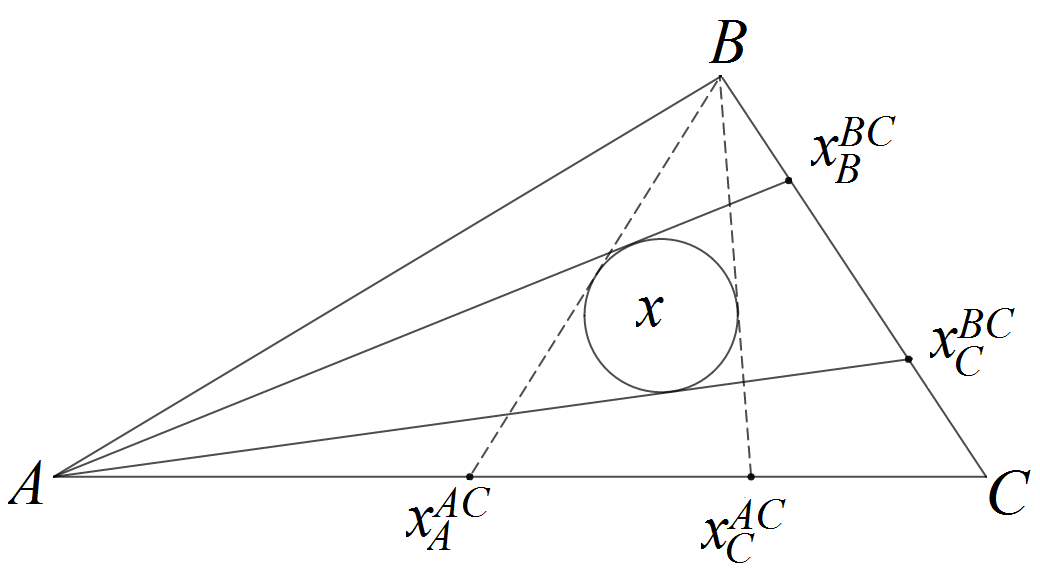}  
\caption{ }
  \label{fig:Circles2}
\end{figure}
\columnbreak
   \begin{figure}[H]
\includegraphics[width=0.3\textwidth]{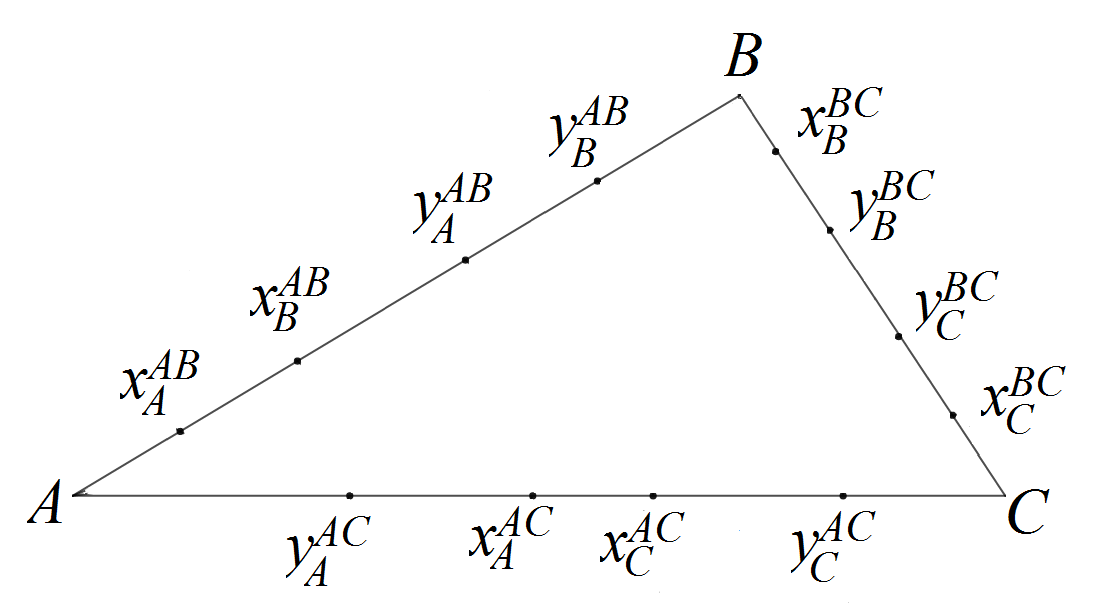}  
\caption{ }
  \label{fig:DismissCase2}
\end{figure}
\end{multicols}
 
\item Consider class $S_{15}$. Figure \ref{fig:DismissCase3} illustrates
   configuration $C_{135}$ as an example from class $S_{15}$. Take
   tangent lines to $x$, connecting A with $x_B^{BC}$ and $x_C^{BC}$,
   see Figure \ref{fig:Circles4}. It follows from assumption that $y$
   should have points in $\triangle{x_C^{AC}BC}$ and
   $\triangle{x_A^{AB}AC}$. Moreover, $y$ is inside
   $\triangle{Ax_B^{BC}x_C^{BC}}$. So, $y$ should have points in each of
   two disjoint areas of $\triangle{Ax_B^{BC}x_C^{BC}}$, whose union is
   $\triangle{Ax_B^{BC}x_C^{BC}}\backslash{x}$. Then due to Corollary \ref{corollary4}, the case is dismissed as impossible for realization.
   Configurations from classes $S_{17}, S_{21}, S_{22}, S_{34},S_{35}$ are dismissed using similar argument.   

\begin{multicols}{2}
 \begin{figure}[H]
\includegraphics[width=0.3\textwidth]{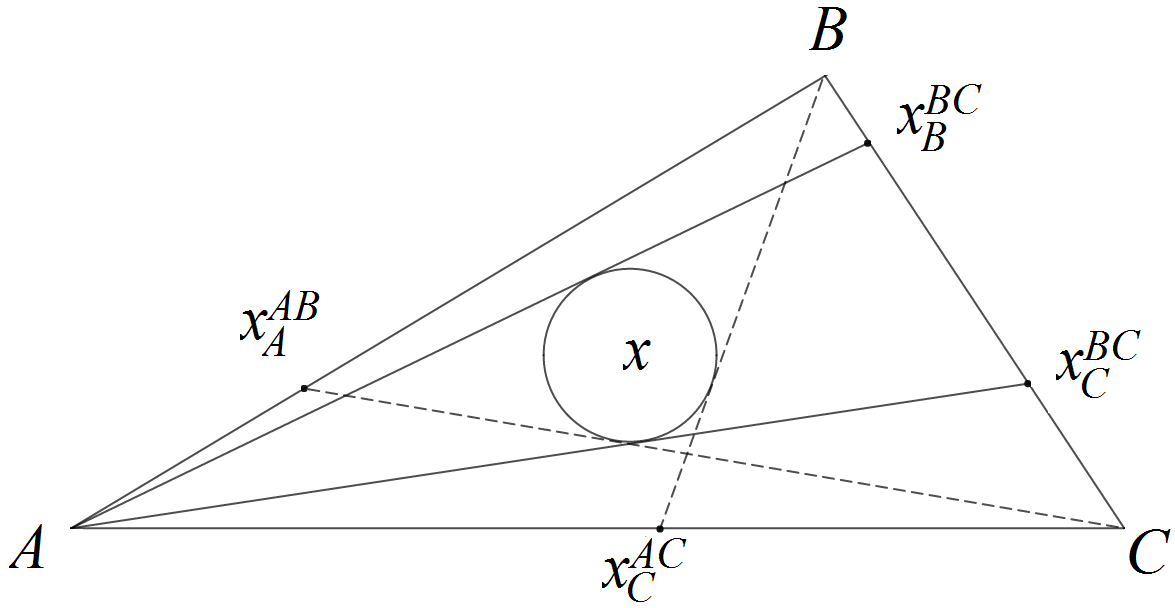}  
\caption{ }
  \label{fig:Circles4}
\end{figure}

\columnbreak

   \begin{figure}[H]
\includegraphics[width=0.3\textwidth]{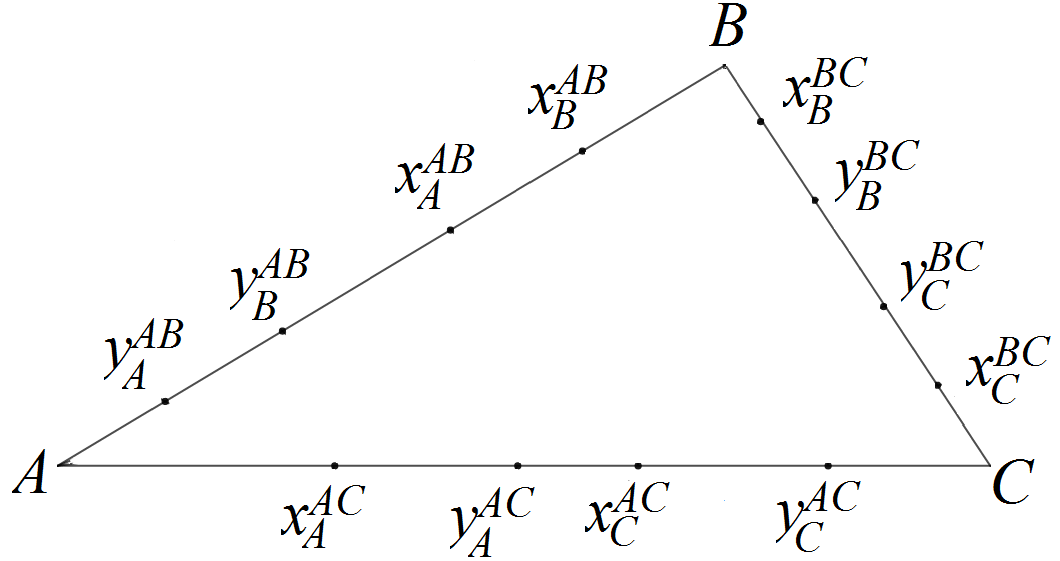}  
\caption{ }
  \label{fig:DismissCase3}
\end{figure}
 \end{multicols}

\item Consider class $S_{10}$. Figure \ref{fig:DismissCase4} illustrates
   configuration $C_{225}$ as an example from class $S_{10}$. Take
   tangent lines to $x$, connecting vertex $B$ with $x_A^{AC}$ and $x_A^{AC}$, vertex $A$
   with $x_C^{BC}$, and vertex $C$ with $x_B^{AB}$, see Figure \ref{fig:Circles3}.
   Let $Ax_C^{BC}$ and $Cx_B^{AB}$ intersect in point $O$. It follows
   from assumption that $y$ should be in $\triangle{COx_C^{BC}}$. 
   Moreover, $y$ should have a point in $\triangle {x_A^{AC}Bx_C^{AC}}$.
   Due to Lemma \ref{lemma6}, $O$ is not in $\triangle {x_A^{AC}Bx_C^{AC}}$, therefore, the case is dismissed as impossible for realization. 
   Configurations from class $S_6$ is dismissed using similar argument. 

\begin{multicols}{2}

\begin{figure}[H]
\includegraphics[width=0.3\textwidth]{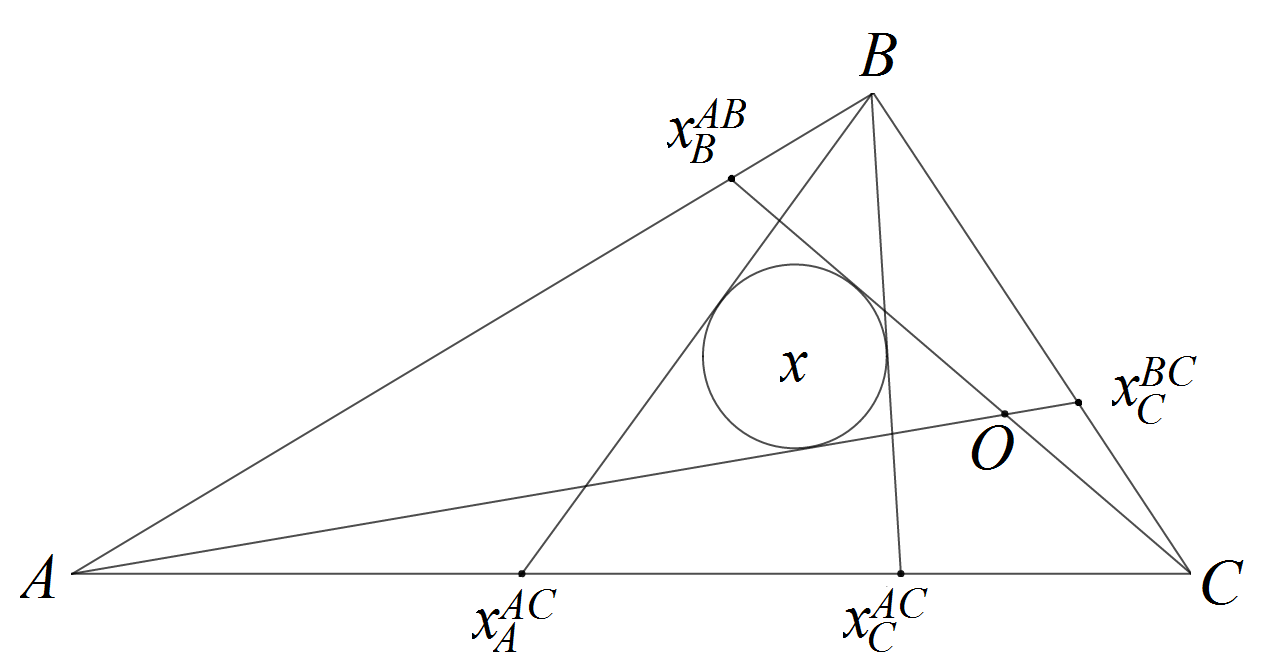}  
\caption{ }
\label{fig:Circles3}
\end{figure}

\columnbreak

\begin{figure}[H] 
\includegraphics[width=0.3\textwidth]{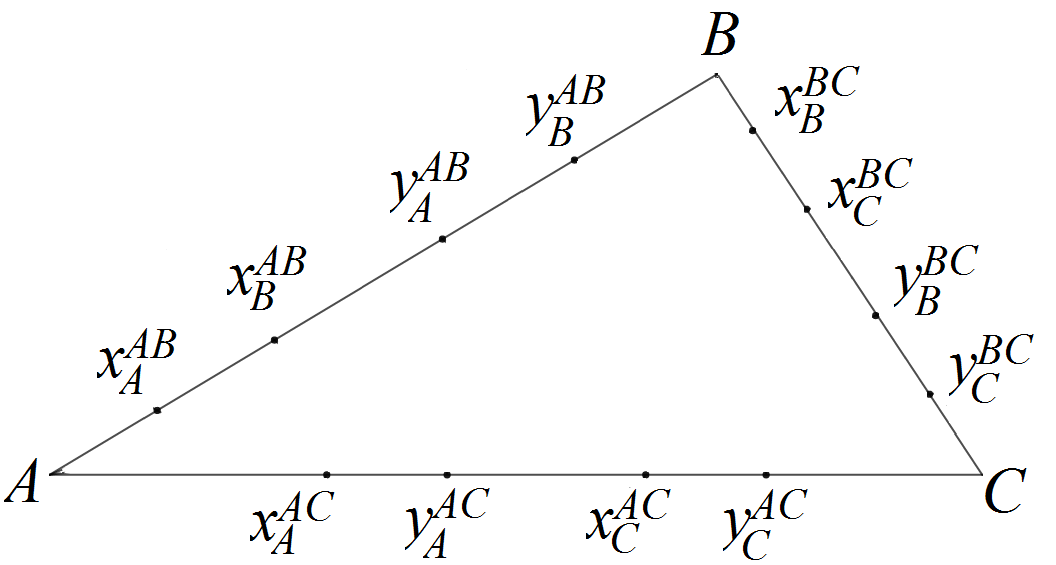}  
\caption{ }
\label{fig:DismissCase4}
\end{figure}

\end{multicols}

\end{enumerate}
 
Now consider the remaining classes of configurations. 
We illustrate realizations of non-dismissed configurations by a representative from each of classes $S_{1},S_{3},S_7,S_{11}, S_{16},S_{19},$ $S_{23},S_{27},S_{29},S_{33},S_{37}$ in Appendix \ref{App:AppendixC}. Realizations of configurations from other classes are symmetric to those illustrated. Say, configurations $C_{125}$ from $S_7$ and $C_{126}$ from $S_8$ are symmetric. So, we consider $\{S_7, S_8\}$ as a combination of classes representing symmetric realizations. 
Similarly, $\{S_3,S_4\}, \{S_{11},S_{14}\}, \{S_{16},S_{18}\}, \{S_{19},S_{20}\}$, $\{S_{23},S_{24},S_{26}\}, \{S_{29},S_{30}\}$, $\{S_{33},S_{36}\}$ represent symmetric configurations.

Thus, cases that are not dismissed are all realizable. We now want to show that  in all these realizable cases the Weak Carousel property holds.

We will need the following observation to guarantee placement of circle $y$ with respect to placement of $x$.
\begin{lemma}\label{ACN} Let $x,y$ be two circles inside $\triangle ABC$ which form configuration $C_{jkl}$, $1\leq j,k,l \leq 6$ with respect to projections to sides $AB$, $BC$ and $CA$, respectively. Consider tangent lines $(Ax_B^{BC}), (Cx_B^{AB})$ to circle $x$, and let $N= (Ax_B^{BC})\cap (Cx_B^{AB})$.
If $j \in \{1,3,5\}$ and $k\in \{2,3,6\}$, then $y$ is inside $\triangle ACN$.
\end{lemma}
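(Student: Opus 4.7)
The plan is to express the containment $y \subseteq \triangle ACN$ as two half-plane conditions on $y$ and to verify each from the configuration hypothesis on the corresponding side. Observe first that, within $\triangle ABC$, the region $\triangle ACN$ is cut out by the two tangent segments from $A$ to $x_B^{BC}$ and from $C$ to $x_B^{AB}$, which cross at $N$. Since $y$ is already contained in $\triangle ABC$, the target reduces to showing (a) $y$ lies on the $C$-side of the line $(Ax_B^{BC})$ inside $\triangle ABC$, and (b) $y$ lies on the $A$-side of the line $(Cx_B^{AB})$ inside $\triangle ABC$.

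Next I would decode the hypotheses against Figure~\ref{fig:Segments}. The three configurations with $k \in \{2,3,6\}$ on side $BC$ are precisely those in which $x_B^{BC}$ strictly precedes $y_B^{BC}$ in the clockwise walk from $B$ to $C$ (namely $x$-before-$y$ disjoint, $x$-starts-overlap, and $y$-inside-$x$); symmetrically, $j \in \{1,3,5\}$ on side $AB$ are precisely those in which $y_B^{AB}$ strictly precedes $x_B^{AB}$ in the walk from $A$ to $B$, equivalently, $x_B^{AB}$ is strictly closer to $B$ than $y_B^{AB}$.

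The geometric core is a nested-cone comparison at a common vertex. For (a): $(Ay_B^{BC})$ is, by construction, the tangent from $A$ to circle $y$ on the $B$-side of $y$, so all of $y$ lies on the $C$-side of $(Ay_B^{BC})$. Because $y_B^{BC}$ is strictly on the $C$-side of $x_B^{BC}$ on segment $BC$, inside the wedge $\angle BAC$ the line $(Ay_B^{BC})$ sits strictly on the $C$-side of $(Ax_B^{BC})$. Hence the $C$-sided sub-wedge cut off by $(Ay_B^{BC})$ is contained in the $C$-sided sub-wedge cut off by $(Ax_B^{BC})$, and therefore $y$ lies on the $C$-side of $(Ax_B^{BC})$. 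A mirror-image argument using the tangents $(Cx_B^{AB})$ and $(Cy_B^{AB})$ from $C$, together with the hypothesis on $j$, gives (b). Combined with $y \subseteq \triangle ABC$, these two containments place $y$ in $\triangle ACN$. The main obstacle, if it can be called one, is the bookkeeping in the second paragraph: one must verify directly from Figure~\ref{fig:Segments} that each of the three listed labels on each side really does force the strict ordering claimed; once that is settled, the geometric content is a single nested-cone observation.
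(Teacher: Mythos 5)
Your proposal is correct and follows essentially the same route as the paper: the paper likewise writes $\triangle ACN = \triangle ACx_B^{BC} \cap \triangle ACx_B^{AB}$ and checks each containment by reading off from the configuration codes that the projection of $y$ onto $AB$ lies in $[A,x_B^{AB}]$ (for $j\in\{1,3,5\}$) and onto $BC$ lies in $[x_B^{BC},C]$ (for $k\in\{2,3,6\}$), which is exactly your two half-plane/nested-cone conditions. Your decoding of the labels matches the paper's, and the extra nested-tangent justification you supply is a harmless elaboration of the same step.
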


\begin{proof} Note that $\triangle ACN = \triangle ACx_B^{BC} \cap \triangle ACx_B^{AB}$, so it is enough to show that, under assumption of Lemma, $y$ is inside of both $\triangle ACx_B^{BC}$ and  $\triangle ACx_B^{AB}$. Point $x_B^{AB}$ corresponds to $x_2$ in the coding of projection to $AB$, and the position of projections to $AB$ is indicated in the first index of configuration $C_{jkl}$. By assumption, $j \in \{1,3,5\}$, which implies that $y_1, y_2 \in [A,x_B^{AB}]$. This guarantees that $y$ is inside  $\triangle ACx_B^{AB}$. Point $x_B^{BC}$ corresponds to $x_1$ in the coding of projections to side $BC$, and the position of projections to $BC$ is reflected in the second index of configuration $C_{jkl}$.
By assumption, $k \in \{2,3,6\}$, which implies that $y_1,y_2 \in [x_B^{BC},C]$. This guarantees that $y$ is inside $\triangle ACx_B^{AB}$.
\end{proof}

Now pick any realized class and pick the first listed configuration there. By Lemma \ref{ACN}, we will be guaranteed that $y$ is inside $\triangle ACN$.
Check the illustration of picked configurations on Figures \ref{fig:S1} - \ref{fig:S37}.

By Corollary \ref{suff}, any of 4 sufficient conditions below would imply $y \in ch_c(x,A,C)$, which is enough  to conclude that the Weak Carousel property holds. These are 
\begin{itemize}
\item[(1)] $y_C^{AC}$ is closer to $C$ than $x_C^{AC}$;
\item[(2)] $y_A^{AC}$ is closer to $A$ than $x_A^{AC}$;
\item[(3)] $y_C^{BC}$ is closer to $C$ than $x_C^{BC}$;
\item[(4)] $y_A^{AB}$ is closer to $A$ than $x_A^{AB}$.
\end{itemize} 

We list here the conditions that hold for given configurations: $C_{121}$ from $S_1$ - (1),(3),(4); $C_{123}$ from $S_3$ - (3),(4); $C_{125}$ from $S_7$ - (1),(3); $C_{133}$ from $S_{11}$ - (4); $C_{136}$ from $S_{16}$ - (2); $C_{163}$ from $S_{19}$ - (3); $C_{165}$ from $S_{23}$ - (1),(3); $C_{335}$ from $S_{29}$ - (1); $C_{365}$ from $S_{33}$ - (1),(3); 
$C_{565} $ from $S_{37}$ - (1),(3).

It leaves only $S_{27}$, for which none of the sufficient conditions (1)-(4) holds. So we consider this case separately. Here the projections of $y$ on each side of $\triangle ABC$ are strictly inside of respective projections of $x$. Refer to Figure  \ref{fig:S27} for particular realization of this configuration. We note that $y$ can be located anywhere inside hexagon formed by 6 tangent lines to circle $x$ from $A,B,C$. 
If $y$ has no intersection with area $w_N$ defined in Lemma \ref{wN}, then $y \in ch_c(x,A,C)$.

If $y$ is located so that it has non-empty intersection with area $w_N$, then by Corollary \ref{corollary4}, $y$ would have no intersection with area $w_{AC}$, which is disjoint from $w_N$ outside $x$ and inside $\triangle ACN$. In particular, no intersection with area $w_M\subseteq w_{AC}$ in the corner of hexagon next to point $M= (Bx_C^{AC})\cap (Ax_C^{BC})$, outside circle $x$ and inside $\triangle ABM$. Thus, $y$ is inside $\triangle ABM$ and avoids area $w_M$, therefore, $y \in ch_c(x,A,B)$. We conclude that, in any case, $y$ is in the closure of $x$ and two vertices of $\triangle ABC$, which is needed.

Now, we consider configurations when circles $x$ and $y$ may have coincident projection points. We prove the property when $x$ and $y$ have at least one common tangent line through one of the vertices, say, projection points $x_C^{BC}$ and $y_C^{BC}$ coincide . 

\begin{itemize}
\item[Case 1.] $x$ and $y$ are on one side of a semi-plane made by the common tangent line. Say, $x$ and $y$ are in $\triangle ABx_C^{BC}$. 
Wlog consider configurations with $|Ax_C^{AC}|\leq |Ay_C^{AC}|$. Let $N$ be a point of intersection of tangent lines from $A$ and $B$ to $y$ as shown on Figure \ref{fig:coinc1ab}(a). Then $x$ is in $\triangle{ABN}$. 

Let $w_1$ and $w_2$ be two disjoint areas in $\triangle{ABN}$. By Corollary \ref{corollary4}, $x$ may have intersection with only one of areas $w_1,w_2$. If $x$ is in $\triangle{ABN} \backslash w_2$ as in Figure \ref{fig:coinc2}, then $x\in ch_{c}(\{y, A, B \})$. If $x$ is in $\triangle{ABN} \backslash w_1$ as in Figure \ref{fig:coinc3}, then $x\in ch_{c}(\{y, B, C \})$. Indeed, $  w_2 \subseteq \CHull(\tilde{y}\cup\{C\})$ by Lemma \ref{wN}, see Figure \ref{fig:coinc1ab}(b).

\begin{figure}[H]
\begin{center}
\subfigure[ ]{
\includegraphics[width=0.3\textwidth]{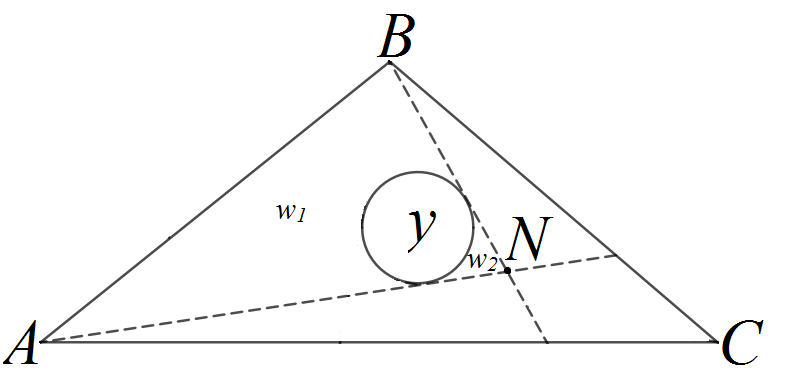}%
}%
\hspace{3ex}
\subfigure[ ]{
\includegraphics[width=0.3\textwidth]{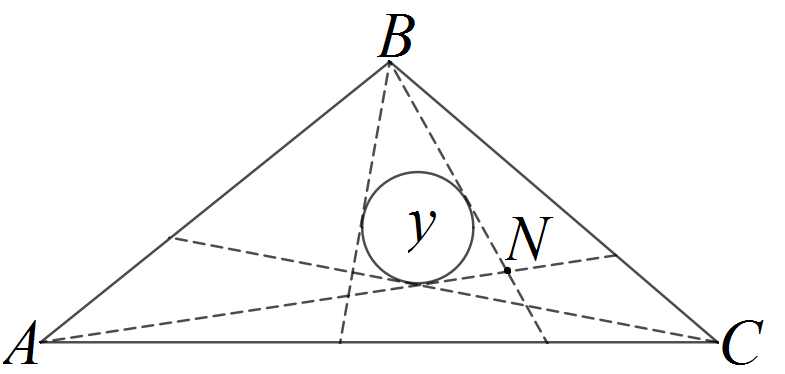}%
} 
\caption{ }
\label{fig:coinc1ab}
\end{center}
\end{figure}

\begin{multicols}{2}
\begin{figure}[H]
\includegraphics[width=0.3\textwidth]{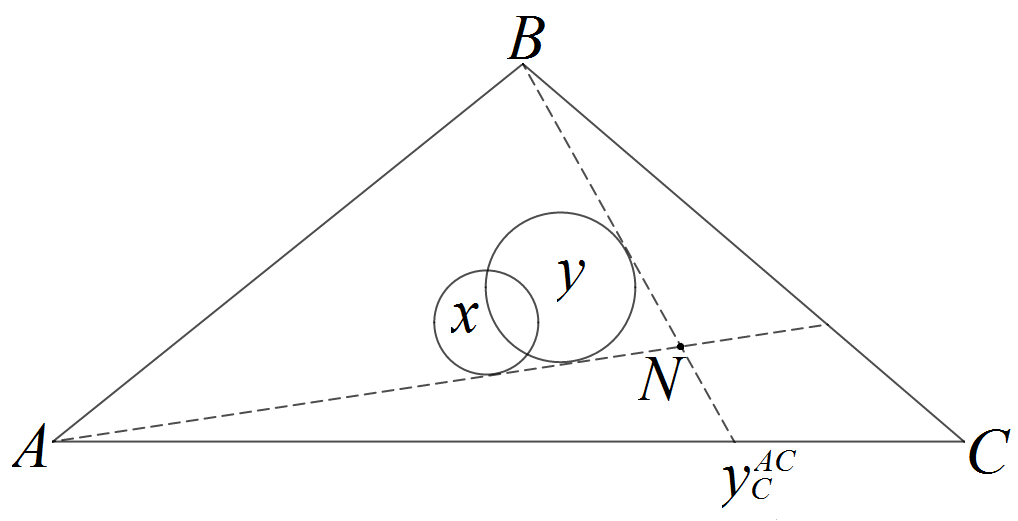}  
  \caption{}
  \label{fig:coinc2}
\end{figure}
\columnbreak
\begin{figure}[H]
\includegraphics[width=0.3\textwidth]{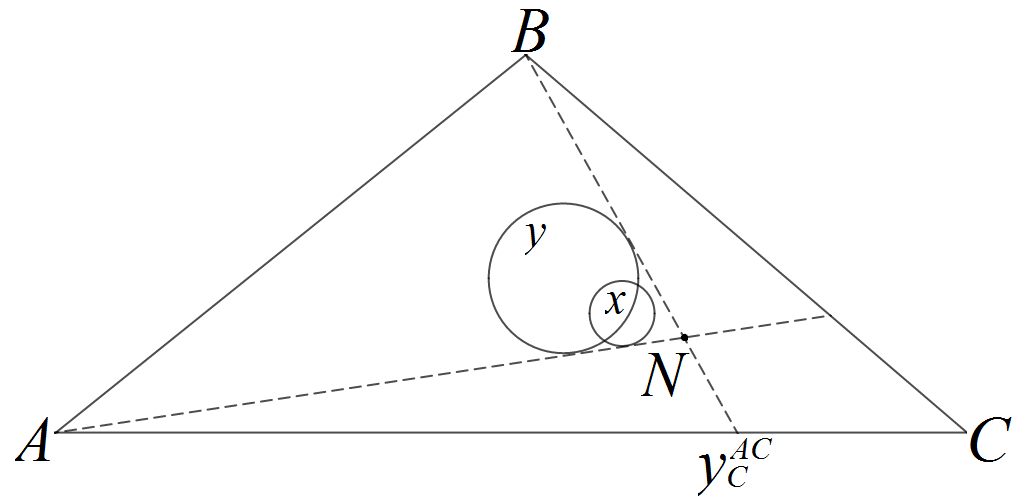}  
  \caption{}
  \label{fig:coinc3}
\end{figure}
\end{multicols}

\item[Case 2.] $x$ and $y$ are on opposite sides of a semi-plane made by the common tangent line. Say, $x$ is in $\triangle Ax_C^{BC}B$ and $y$ is in 
$\triangle Ax_C^{BC}C$.

\begin{multicols}{2}
\begin{figure}[H]
\includegraphics[width=0.3\textwidth]{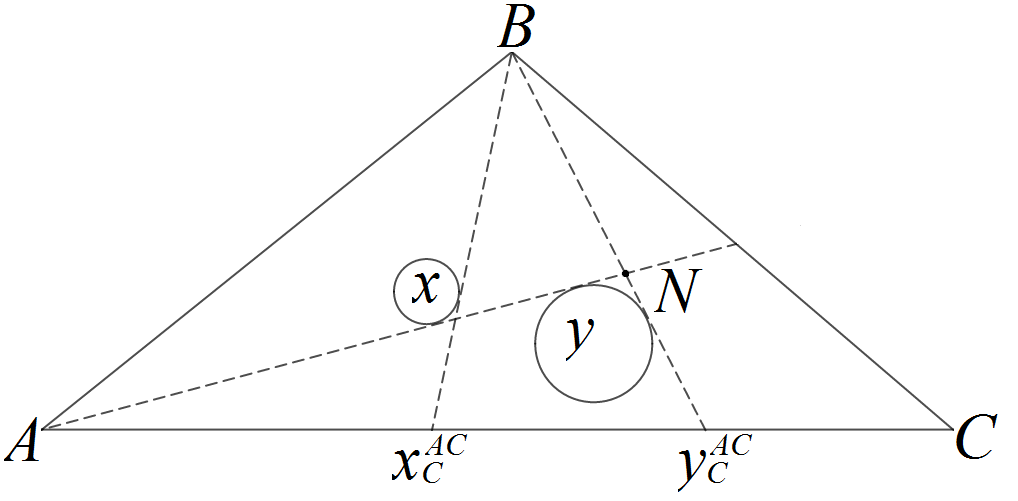}  
  \caption{}
  \label{fig:coincid1}
\end{figure}
\begin{figure}[H]
\includegraphics[width=0.3\textwidth]{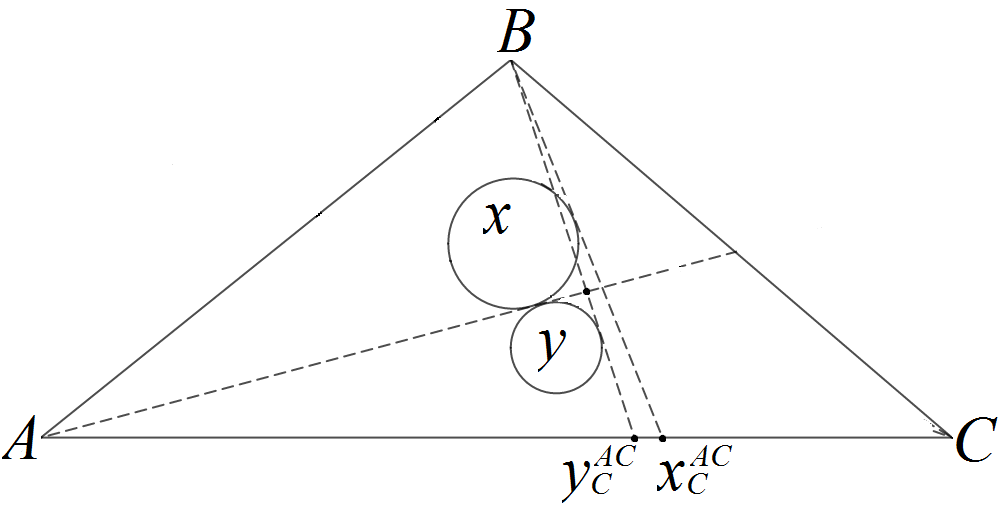}  
\caption{}
  \label{fig:coincid2}
\end{figure}
\end{multicols}

If $|Ax_C^{AC}|\leq |Ay_C^{AC}|$ as in Figure \ref{fig:coincid1}, then $x\in ch_{c}(\{A, B, y\})$. 
If $|Ax_C^{AC}|>|Ay_C^{AC}|$ as in Figure \ref{fig:coincid2}, then $y \in  ch_{c}(\{A, x_C^{AC}, x\})\subseteq ch_{c}(\{A,C,x\})$, which is needed.

\end{itemize} 
\end{proof}

\section{Lemmas} \label{Lemmas}

\begin{lemma}\label{lemma4.2}
Let $p$ be a circle inscribed in $\angle{BAC}$ and 
let $w_1$ denote the area of $\angle{BAC}$ outside circle $p$ and adjacent to vertex $A$, see Fiugure \ref{fig:Lemma5}. If circle $y$ is inside $\angle{BAC}$ and has a point inside which also belongs to $w_1$, then $y \subseteq \CHull (\tilde{p} \cup \{A\})$.
\end{lemma}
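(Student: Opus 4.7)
The plan is a case analysis on the mutual position of the disks $y$ and $\tilde{p}$, resting on the observation that \emph{both} $\tilde{p} \cup \overline{w_1}$ and $\tilde{p} \cup \overline{w_2}$ are convex subsets of $\angle BAC$. Since $p$ is inscribed in $\angle BAC$, the two rays $AB$ and $AC$ are tangent to $p$ at points $T_+$ and $T_-$; therefore $\CHull(\tilde{p} \cup \{A\})$ is the region bounded by the segments $AT_+, AT_-$ and by the arc of $p$ from $T_+$ to $T_-$ lying on the side opposite to $A$, and in fact $\CHull(\tilde{p} \cup \{A\}) = \tilde{p} \cup \overline{w_1}$. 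The angle is partitioned into the three regions $w_1$, $\tilde{p}$, $w_2$.

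The key geometric lemma I would establish first is that $\tilde{p} \cup \overline{w_2}$ is also convex. For any two points $P_1, P_2$ in $\tilde{p} \cup \overline{w_2}$, the segment $[P_1, P_2]$ lies in the convex angle $\angle BAC$, and one must show it does not enter $w_1$. The argument uses that any straight segment meets the circle $\partial p$ in at most two points. A short case analysis on whether each endpoint lies in $\tilde{p}$ or in $\overline{w_2}$, while tracking whether each crossing of $\partial p$ is through the near arc or the far arc, shows that a detour through $w_1$ would force at least three crossings of $\partial p$, a contradiction.

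With this lemma, I would perform a case analysis on the relative positions of $y$ and $\tilde{p}$. If $y \cap \tilde{p} = \emptyset$, then $y \subseteq \angle BAC \setminus \tilde{p}$; since $w_1$ and $w_2$ lie in distinct connected components of this set (meeting only at $T_\pm \in \partial p$), the connected disk $y$ lies entirely in $\overline{w_1}$ or $\overline{w_2}$, and the hypothesis $y \cap w_1 \neq \emptyset$ forces $y \subseteq \overline{w_1} \subseteq \CHull(\tilde{p} \cup \{A\})$. The case $y \subseteq \tilde{p}$ is immediate. If $\tilde{p} \subseteq y$, then $T_\pm \in y$, and because $y \subseteq \angle BAC$ the disk $y$ can intersect each ray in only a single point, so $y$ is tangent to both rays at $T_\pm$; the unique circle tangent to both rays at these prescribed points is $p$ itself, giving $y = \tilde{p}$. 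In the remaining case, $y$ and $\tilde{p}$ overlap properly, $\partial y \cap \partial p$ consists of exactly two points, and $\partial y$ splits into an arc $\gamma_1$ inside $\tilde{p}$ and a connected arc $\gamma_2$ outside, with $\gamma_2 \subseteq \overline{w_1}$ or $\gamma_2 \subseteq \overline{w_2}$ by connectedness. If $\gamma_2 \subseteq \overline{w_1}$, then $\partial y \subseteq \tilde{p} \cup \overline{w_1} = \CHull(\tilde{p} \cup \{A\})$, and convexity of $\CHull(\tilde{p} \cup \{A\})$ gives $y \subseteq \CHull(\tilde{p} \cup \{A\})$. If $\gamma_2 \subseteq \overline{w_2}$, then $\partial y \subseteq \tilde{p} \cup \overline{w_2}$, and the key lemma gives $y \subseteq \tilde{p} \cup \overline{w_2}$, a set disjoint from $w_1$, contradicting the hypothesis.

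The main obstacle is the key geometric lemma on the convexity of $\tilde{p} \cup \overline{w_2}$: the intuition is clear (a segment cannot ``pass around $p$'' from $w_2$ to $w_1$ because $p$ is tangent to both rays of $\angle BAC$, so there is no room to sneak past), but the formal verification requires careful bookkeeping of how a segment crosses the circle $\partial p$ and which arcs (near or far) each crossing uses. Once this lemma is in place, the remaining case analysis is a clean exercise in convexity and connectivity, with each case handled in a few lines.
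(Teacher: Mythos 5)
Your proposal is correct, but it takes a genuinely different route from the paper's. The paper encloses $y$ in an auxiliary circle $g$ centered at the center of $y$ and tangent to the nearer side of the angle, then in a circle $s\supseteq g$ inscribed in $\angle BAC$, and finally compares $s$ with $p$ via the homothety centered at $A$ carrying $p$ to $s$: if the ratio is at most $1$, every point of $s$ lies on a segment from $A$ to a point of $p$, so $y\subseteq s\subseteq \CHull(\tilde{p}\cup\{A\})$; if it exceeds $1$, then $s$ (hence $y$) misses $w_1$ altogether, contradicting the hypothesis. You instead partition the angle into $w_1$, $\tilde{p}$, $w_2$ and rest everything on the convexity of the two sets $\tilde{p}\cup\overline{w_1}=\CHull(\tilde{p}\cup\{A\})$ and $\tilde{p}\cup\overline{w_2}$, followed by a position case analysis. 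The paper's homothety argument is shorter and needs no case analysis on how $y$ meets $p$; yours isolates a reusable structural fact --- your key lemma in effect strengthens Corollary \ref{corollary4} from ``no circle meets both $w_1$ and $w_2$'' to ``no segment with endpoints in $\tilde{p}\cup\overline{w_2}$ meets $w_1$'' --- and makes the topology of the configuration explicit.

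The one step you must firm up is the key lemma, which you rightly flag. It is true, and your crossing count does close: a segment joining two points of $\tilde{p}\cup\overline{w_2}$ stays in the closed angle, so it cannot cross the open tangent segments $(A,T_\pm)$ (a segment contained in a closed half-plane that meets the boundary line at an interior point must lie on that line, and the boundary rays miss the open region $w_1$); hence any excursion into $w_1$ enters and leaves through the near arc, with the segment lying in the open disk $\mathrm{int}(\tilde{p})$ immediately before entering and immediately after leaving, so the intersection of the segment with the open disk would be disconnected --- impossible by convexity. A cleaner alternative avoiding all bookkeeping: $\tilde{p}\cup\overline{w_2}$ is the intersection of the closed angle with all closed half-planes that contain $p$ and are bounded by tangent lines to $p$ at points of the near arc, hence convex as an intersection of convex sets. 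Finally, a complete write-up should spend a sentence each on tangential (rather than transversal) meetings of $\partial y$ with $\partial p$ in your overlap case, and on $y$ of radius zero (allowed by the paper's conventions); both are absorbed by your Case 1 and the convexity of $\CHull(\tilde{p}\cup\{A\})$.
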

\begin{proof}

Let $F$ be the center of circle $y$. Suppose the distance from $F$ to line $(AB)$ is less than or equal to the distance from $F$ to line $(AC)$. Then $y$ is inside another circle $g$, with center $F$ and tangent line $(AB)$; moreover, $g \subseteq \angle{BAC}$, see Figure \ref{fig:Lemma42}. Let circle $s$ share with $g$ its touching point $Q \in (AB)$, and let it have its center $O_s$ on ray $[A,O_p)$,  where $O_p$ is the center of circle $p$. Then $g\subseteq s$; moreover, circle $s$ is inscribed into $\angle BAC$. 

 \begin{figure}[H]
\includegraphics[width=0.3\textwidth]{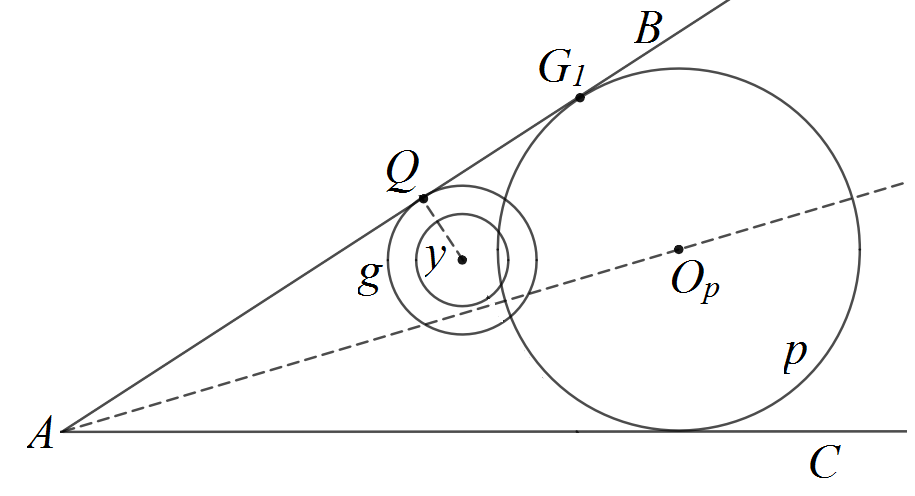}  
\caption{ }
  \label{fig:Lemma42}
\end{figure}
Note that circle $s$ is always homothetic to circle $p$, with the center of homothety $A$ and the coefficient of homothety $k=\frac{|A O_s|}{|AO_p|}$.

There are two possibilities: 

(1) $k >1$, in which case $s$ does not have any point of intersection with $w_1$, see Figure \ref{fig:Lemma422}. Indeed, if point $E$ is inside $s$ and belongs to $w_1$, then there should be point $E'$ inside $p$ and homothetic to $E$. Then $E' \in [A,E]$, hence, $E'$ is also in $w_1$, which contradicts the assumption that $w_1$ is outside $p$. 

(2) $k\leq 1$, which means that $O_s$ belongs to segment $[A,O_p]$, see Figure \ref{fig:Lemma421}. Then every point $M_s$ of circle $s$ is on the segment $[A,M_p]$, where point $M_p$ is in $p$ and homothetic to $M_s$. It follows that circle $s$ is in the convex closure of circle $p$ and point $A$. This implies  $y \subseteq g\subseteq s\subseteq \CHull (\tilde{p} \cup \{A\})$, which is needed.

\begin{multicols}{2}

 \begin{figure}[H]
\includegraphics[width=0.3\textwidth]{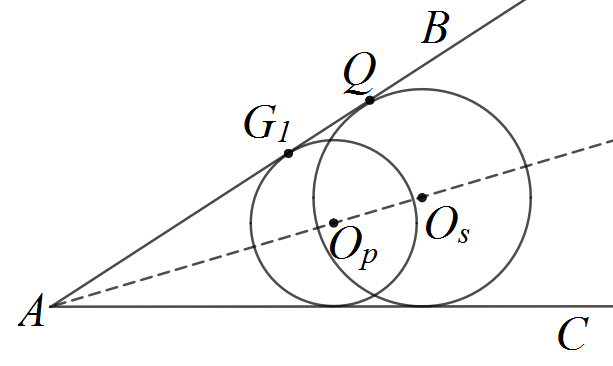}  
\caption{ }
  \label{fig:Lemma422}
\end{figure} 

\columnbreak

\begin{figure}[H]
\includegraphics[width=0.3\textwidth]{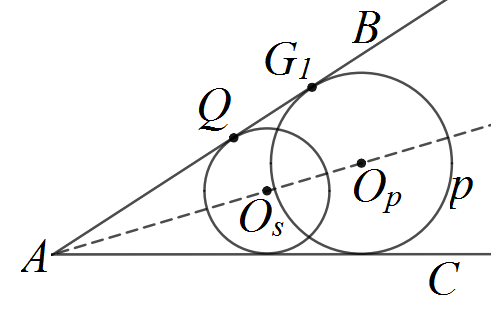}  
\caption{ }
  \label{fig:Lemma421}
\end{figure}
\end{multicols}
\end{proof}

\begin{corollary}\label{corollary4}
Suppose $p$ is a circle inscribed in $\angle{A}$ of triangle $\triangle{ABC}$. Let $w_1$ be area of $\triangle{ABC}$ outside circle $p$ and adjacent to vertex $A$, and $w_2= \triangle{ABC}\setminus (p \cup w_1)$ as in Figure \ref{fig:Lemma5}. Then any circle $y\subseteq \triangle{ABC}$ cannot have non-empty intersections with both areas $w_1,w_2$.
 \end{corollary}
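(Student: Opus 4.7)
The plan is to derive the corollary immediately from Lemma \ref{lemma4.2} by a clean contradiction argument. Suppose, for a contradiction, that a circle $y\subseteq \triangle ABC$ meets both $w_1$ and $w_2$. Since $p$ is inscribed in $\angle BAC$ and $y$ has a point in common with $w_1$ while sitting inside $\angle BAC$, Lemma \ref{lemma4.2} applies and gives
\[
\tilde{y} \subseteq \CHull(\tilde{p}\cup\{A\}).
\]

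The key geometric observation I would then invoke is the identification
\[
\CHull(\tilde{p}\cup\{A\}) \;=\; \tilde{p}\cup w_1.
\]
This holds because $p$ is inscribed in $\angle BAC$, so the two rays $[A,B)$ and $[A,C)$ are precisely the two tangent lines from $A$ to $p$. Consequently the boundary of $\CHull(\tilde{p}\cup\{A\})$ consists of two segments from $A$ along those tangent rays, together with the arc of $p$ on the far side from $A$; the region enclosed is exactly the closed disk $\tilde{p}$ together with the ``near'' region $w_1$ cut off between $A$ and $p$ inside the angle. Since $w_1$ by definition is the part of $\triangle ABC$ adjacent to $A$ and outside $p$, and this part is contained in the angle, the identification is immediate.

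Combining the two displays yields $\tilde{y}\subseteq \tilde{p}\cup w_1$. But $w_2=\triangle ABC\setminus(p\cup w_1)$ is disjoint from $\tilde{p}\cup w_1$, contradicting the assumption that $y$ meets $w_2$. The only step that requires any care is the identification of $\CHull(\tilde p\cup\{A\})$ with $\tilde p\cup w_1$, and this is a routine consequence of the inscribed-circle hypothesis, so I do not expect a serious obstacle.
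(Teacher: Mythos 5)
Your proof is correct and follows essentially the same route as the paper: apply Lemma \ref{lemma4.2} to conclude $\tilde{y}\subseteq \CHull(\tilde{p}\cup\{A\})$ and then observe this set is disjoint from $w_2$. The only difference is that you spell out the identification $\CHull(\tilde{p}\cup\{A\})=\tilde{p}\cup w_1$ (valid since the sides of $\angle BAC$ are the tangent lines from $A$ to the inscribed circle $p$), which the paper leaves implicit in its ``therefore.''
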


\begin{proof} Since $y$ is inside $\angle BAC$, one can apply Lemma \ref{lemma4.2} and claim, that if $y$ has a point in area $w_1$, then $y \subseteq 
\CHull(\tilde{p}\cup \{A\})$, therefore, $y$ does not have common points with $w_2$.
\end{proof}

 \begin{figure}[H]
\includegraphics[width=0.3\textwidth]{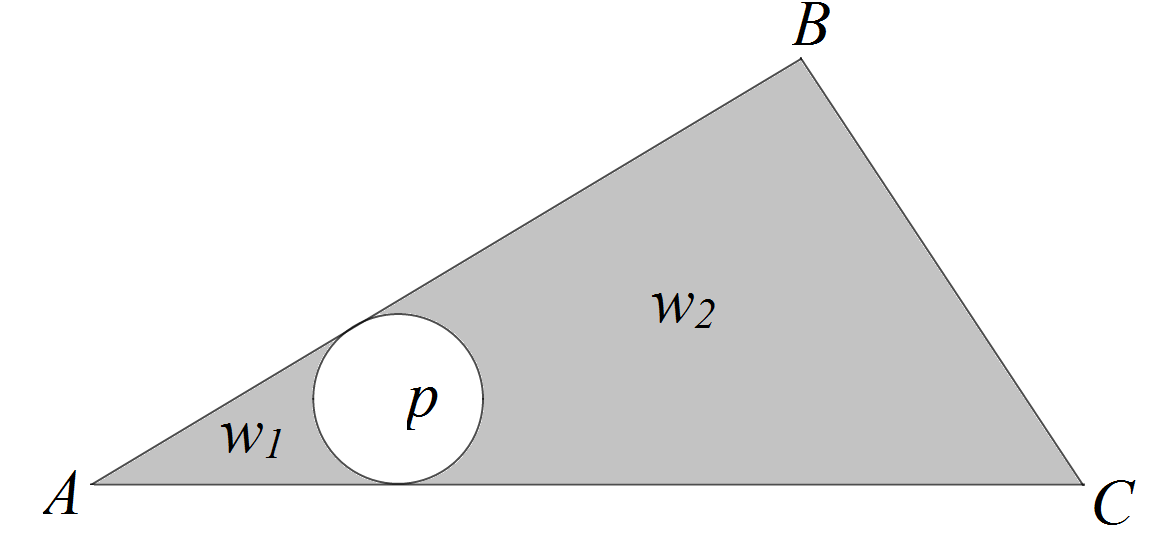}  
\caption{ }
  \label{fig:Lemma5}
\end{figure}

The following couple lemmas are elementary statements from plane geometry. We provide the proofs for completeness. 

     \begin{lemma}\label{lemma6}
Suppose $s$ is a circle inscribed in $\angle{A}<\pi$. Circle $s$ divides $\angle{A}$ in two disjoint areas $w_1$ and $w_2$. Then, any non-parallel tangent lines: to any point of $s$ on the border with $w_1$ and to any point of $s$ on the border with $w_2$ - intersect in a point outside the interior area of $\angle{A}$.
   \end{lemma}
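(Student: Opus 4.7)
The plan is to set up Cartesian coordinates with $A$ at the origin and the bisector of $\angle A$ along the positive $x$-axis. Let $\alpha$ be the half-angle, so the sides of $\angle A$ are the rays $y = \pm x\tan\alpha$, $x \geq 0$. The inscribed circle $s$ has center $O = (d, 0)$ and radius $r = d\sin\alpha$, tangent to the rays at $T_1$ (upper) and $T_2$ (lower). Parametrizing $s$ as $O + r(\cos\theta, \sin\theta)$, the near arc (border of $s$ with $w_1$) is the open arc $\theta \in (\pi/2 + \alpha, 3\pi/2 - \alpha)$, and the far arc (border with $w_2$) is its complement.

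First, I would show that for $P_1$ strictly on the near arc the tangent $\ell_1$ at $P_1$ separates $A$ from $O$: plugging $(0,0)$ into $x\cos\theta_1 + y\sin\theta_1 = d\cos\theta_1 + r$ yields a negative value since $\cos\theta_1 < -\sin\alpha$ on the near arc. Hence $\ell_1$ crosses the two rays of $\angle A$ at interior points $R_1 \in (A, T_1)$ and $R_1' \in (A, T_2)$. The chord $[R_1, R_1']$ lies in $\overline{w_1}$, and in fact $(R_1, R_1') \setminus \{P_1\} \subseteq w_1$, because any such point can be joined to $A$ by a segment staying on the $A$-side of $\ell_1$ and therefore disjoint from $s$.

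Second, I would show that for $P_2$ strictly on the far arc the tangent $\ell_2$ at $P_2$ is disjoint from $w_1$. The boundary of $w_1$ is $[A, T_1] \cup [A, T_2] \cup (\text{near arc})$; since $\ell_2$ is tangent to $s$ only at $P_2$ it does not meet the near arc, and it suffices to rule out crossings of the open segments $(A, T_1)$ and $(A, T_2)$. If $\ell_2$ crossed $(A, T_1)$ at some $R$, then $R$ would be external to $s$ with two tangent lines, the upper ray (at $T_1$) and $\ell_2$ (at $P_2$). Since the two tangent points from an external point are reflections of each other across the line joining that point to the center, $P_2$ would be the reflection of $T_1$ across line $OR$. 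A brief angular computation gives that for $R \in (A, T_1)$ the direction $\vec{OR}$ has angular position $\varphi \in (\pi/2 + \alpha, \pi)$, so the reflection of $T_1$ (at angle $\pi/2 + \alpha$) across $OR$ has angular position $2\varphi - (\pi/2 + \alpha) \in (\pi/2 + \alpha, 3\pi/2 - \alpha)$, i.e.\ on the near arc, contradicting $P_2$ being on the far arc. A symmetric argument rules out a crossing of $(A, T_2)$.

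Combining the two steps, if $Q = \ell_1 \cap \ell_2$ lay in the interior of $\angle A$, then Step~1 would force $Q \in (R_1, R_1') \setminus \{P_1\} \subseteq w_1$ (using that $Q = P_1$ is impossible because $\ell_2$ meets $s$ only at $P_2 \neq P_1$), contradicting $\ell_2 \cap w_1 = \varnothing$ from Step~2. The boundary cases $P_1 \in \{T_1, T_2\}$ or $P_2 \in \{T_1, T_2\}$ are handled immediately, because one of the tangent lines is then a ray of $\angle A$, so $Q$ lies on that ray and hence on the boundary of $\angle A$. The main obstacle is the angular reflection argument of Step~2, which is the geometric heart of the lemma; an alternative continuity-based approach would instead need to verify that the map sending an external point to its pair of tangent points carries $w_1$ into the near arc and $w_2$ into the far arc.
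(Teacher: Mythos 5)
Your proof is correct, and it takes a genuinely different route from the one in the paper. The paper's argument is kinematic: it fixes the tangent line at the point $N$ on the border with $w_1$, lets the second tangent line rotate starting from a side of $\angle A$ while its touching point travels along the far arc toward the antipode $M$ of $N$, and asserts that the intersection point then moves from $N_1$ monotonically along the ray of the fixed tangent that does not contain $N$, hence stays outside the interior of the angle. Your argument is static: (i) the tangent at a near-arc point separates $A$ from the center, so its trace inside $\angle A$ is a chord whose points, apart from the tangency point, all lie in the near region $w_1$; (ii) the tangent at a far-arc point is disjoint from $w_1$, because a crossing of $(A,T_1)$ or $(A,T_2)$ would, by the reflection characterization of the two tangency points seen from an external point, force its tangency point onto the near arc. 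The intersection point, were it interior to $\angle A$, would then have to lie in $w_1\cap\ell_2=\varnothing$. The trade-off: the paper's proof is shorter and visual but leaves the monotone motion of the intersection point as an unproved continuity claim, whereas your version replaces that claim with a concrete computation (the angular position $2\varphi-(\pi/2+\alpha)$ landing in the near arc) and yields as a byproduct the clean intermediate statement that a far-arc tangent never meets $w_1$, which is in the same spirit as what Lemma \ref{wN} and Corollary \ref{suff} exploit elsewhere. Your handling of the degenerate cases ($P_i\in\{T_1,T_2\}$, parallel tangents, $Q=P_1$) is complete; the only step left implicit is that a line disjoint from the boundary of the bounded region $w_1$ and not contained in it must miss $w_1$ entirely, which is immediate from connectedness and unboundedness of the line.
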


   \begin{proof}[Proof]
   
\begin{multicols}{2}
 \begin{figure}[H]
 \includegraphics[width=0.2\textwidth]{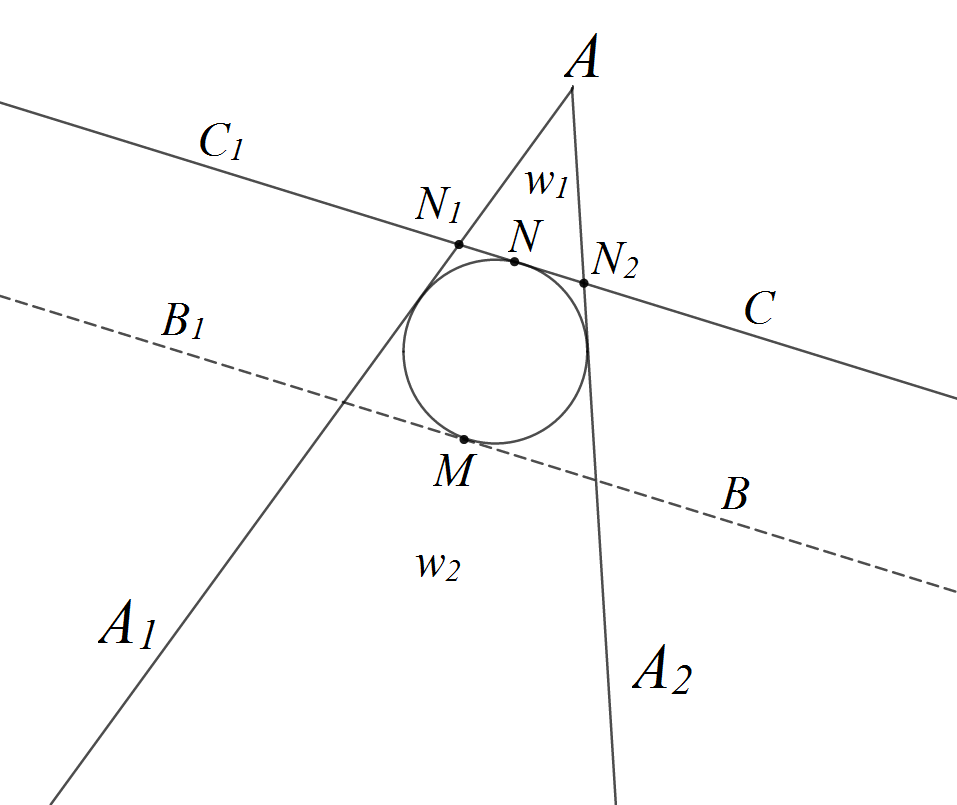}
 \caption{ }
 \label{fig:Lemma6}
 \end{figure}
\columnbreak 
 \begin{figure}[H]
 \includegraphics[width=0.2\textwidth]{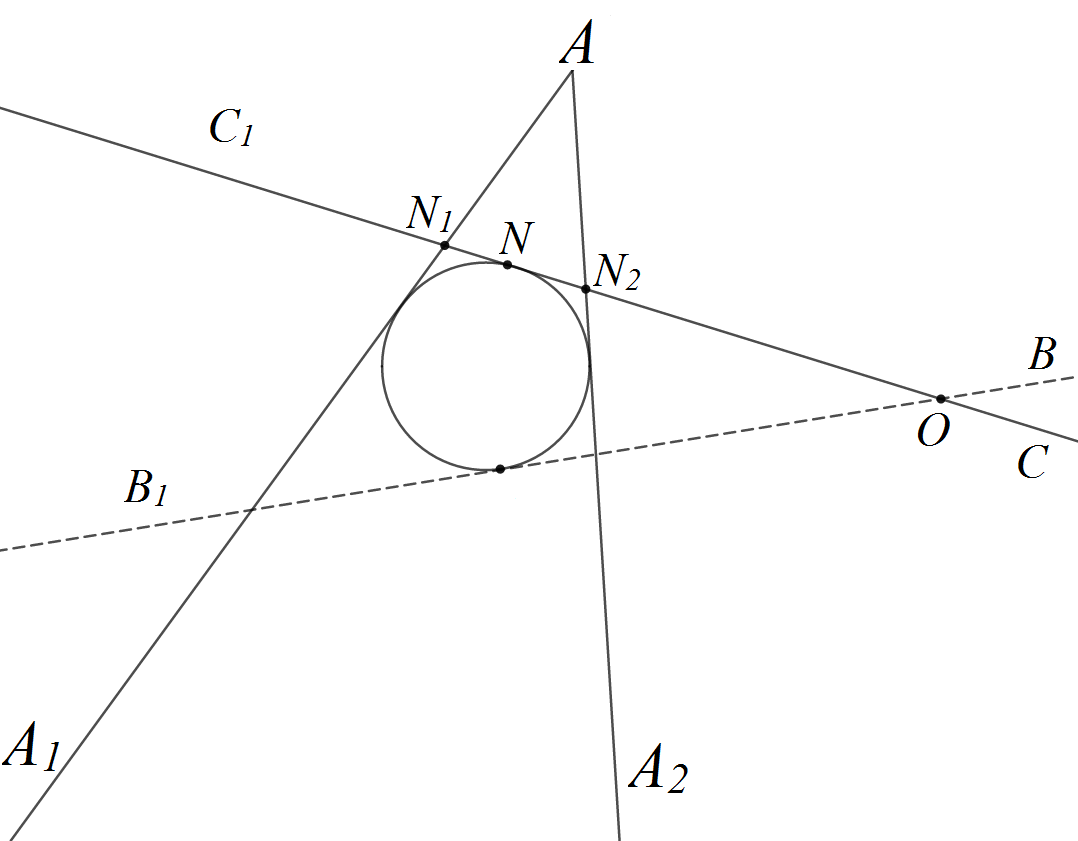}
 \caption{ }
 \label{fig:Lemma6intersect}
 \end{figure}
 \end{multicols}

Call the lines forming angle $\angle{A}$ as $(AA_1)$ and $(AA_2)$. Let $(CC_1)$ be a tangent line to $s$ at a point $N$ which is located on the border of $s$ with $w_1$. Let points of intersection of $(CC_1)$ with $(AA_1)$ and $(AA_2)$ be $N_1$ and $N_2$ respectively. 
Denote by $M$ the other end point of a diameter starting at $N$, see Figure \ref{fig:Lemma6}. 
Since the border of $s$ with $w_1$ is a strictly smaller arc of $s$ than the arc of border with $w_2$, point $M$ belongs the latter arc.
 
Let $(BB_1)$ be a tangent line to $s$ which touches $s$ at some point on the border with $w_2$. We will fix $(CC_1)$, and will treat line $(BB_1)$ as rotating from its initial position as line $(AA_1)$ and toward position when it is parallel to line $(CC_1)$, so that the touching point with $s$ moves from $N_1$ to $M$. If $O$ denotes the point of intersection of $(BB_1)$ and $(CC_1)$ as on Figure \ref{fig:Lemma6intersect} , then $O$ moves from initial location at $N_1$ along the ray $[N_1)$ of $(CC_1)$, which does not contain point $N$. 

Similarly, if $(BB_1)$ starts as $(AA_2)$ and rotates as the touching point with $s$ moves from $N_2$ to $M$, point $O$ moves from $N_2$ along the ray $[N_2)$ of line $(CC_1)$, which does not contain point $N$.

Thus, the point of intersection of tangent lines $(BB_1)$ and $(CC_1)$ is always outside the interior area of $\angle{A}$.

\end{proof}

\begin{lemma}\label{wN} Suppose circle $x$ is inside $\triangle ABC$, and its tangent lines $(Ax_B^{BC})$ and $(Cx_B^{BA})$ intersect at point $N$. Call area that complements disc defined by $x$ in $\CHull (\tilde{x} \cup\{N\})$ as $w_N$. Then $w_N \subseteq \CHull (\tilde{x} \cup\{B\})$.
\end{lemma}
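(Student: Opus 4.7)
The plan is to reduce the set inclusion $w_N \subseteq \CHull(\tilde{x}\cup\{B\})$ to the single-point inclusion $N \in \CHull(\tilde{x}\cup\{B\})$. If $N$ belongs to this hull, then $\tilde{x}\cup\{N\}$ lies in a convex set, so $\CHull(\tilde{x}\cup\{N\}) \subseteq \CHull(\tilde{x}\cup\{B\})$; since $w_N \subseteq \CHull(\tilde{x}\cup\{N\})$ by the definition of $w_N$, the lemma follows. Everything thus reduces to showing that the extension of segment $BN$ past $N$ meets the disc $\tilde{x}$.

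Set $\ell_A := (Ax_B^{BC})$ and $\ell_C := (Cx_B^{BA})$; these are precisely the two tangent lines from the external point $N$ to $\tilde{x}$, touching it at points $P_A$ and $P_C$ on the arc of $\tilde{x}$ facing $B$. The lines $\ell_A, \ell_C$ divide the plane into four angular regions at $N$; one of these contains $\tilde{x}$ (the \emph{near} cone, bounded by the tangent rays $[N,P_A]$ and $[N,P_C]$), and the opposite one is its reflection through $N$. The central step I would carry out is to prove that $B$ lies in this opposite cone.

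To establish this, I would verify that $\tilde{x}$ and $B$ lie on opposite sides of $\ell_A$, and symmetrically of $\ell_C$. The line $\ell_A$ is a chord of $\triangle ABC$ from $A$ to $x_B^{BC}\in [B,C]$, so it partitions $\triangle ABC$ into $\triangle A B x_B^{BC}$ (containing $B$) and $\triangle A x_B^{BC} C$. Both tangent lines from $A$ to $\tilde{x}$, namely $(Ax_B^{BC})$ and $(Ax_C^{BC})$, cross $BC$ strictly between $B$ and $C$, and they cut out an angular region at $A$ that meets $BC$ exactly along the subsegment $[x_B^{BC},x_C^{BC}]\subseteq [x_B^{BC},C]$. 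Since $\tilde{x}$ is inscribed in this angle at $A$, one concludes $\tilde{x}\subseteq \triangle A x_B^{BC} x_C^{BC}\subseteq \triangle A x_B^{BC} C$, so $\tilde{x}$ sits in the half of $\triangle ABC$ not containing $B$. A mirror argument from vertex $C$ does the same for $\ell_C$, placing $B$ in the cone at $N$ opposite to the one containing $\tilde{x}$.

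Once $B$ is in the opposite cone, tangency closes the argument: since $\ell_A$ and $\ell_C$ are tangent to $\tilde{x}$ at $P_A,P_C$, the disc $\tilde{x}$ is inscribed in the near cone at $N$, so every ray emanating from $N$ into that near cone meets $\tilde{x}$. Extending $BN$ past $N$ sends the ray into precisely the near cone (because $B$ lies in the opposite cone), so this extension hits $\tilde{x}$ at some point $Q$. Then $N\in[B,Q]\subseteq \CHull(\tilde{x}\cup\{B\})$, as required. The main obstacle is the geometric step of correctly identifying which of the four angular regions formed by the two tangents from $A$ contains the inscribed disc and of confirming that both those tangents cross $BC$ strictly inside $[B,C]$; the remainder is convexity and the standard picture of a circle inscribed in the angle formed by its two tangents from an external point.
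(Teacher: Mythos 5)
Your proposal is correct, and it reaches the conclusion by a genuinely different route than the paper. Both arguments hinge on the same initial observation --- that $B$ sits in the angular region at $N$ vertically opposite the cone containing $\tilde{x}$ (the paper phrases this as ``$B$ is located in the smaller of the two angles formed by the rays $[N,x_B^{BA})$ and $[N,x_B^{BC})$'' and declares it easy to observe; you actually justify it, by noting that each tangent line is a cevian of $\triangle ABC$ separating $B$ from the inscribed cone at the opposite vertex, hence from $\tilde{x}$). After that the two proofs diverge. The paper works with the tangent points $B_1,B_2$ of the tangent lines from $B$: it argues that the arc bordering $w_N$ is a subarc of the smaller arc $(B_1B_2)$, then uses a moving-tangent-line argument to show that any two tangents touching that smaller arc meet inside $\triangle B_1BB_2$, so that both $N$ and the bordering arc --- and hence all of $w_N$ --- lie in $\triangle B_1BB_2\subseteq \CHull(\tilde{x}\cup\{B\})$. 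You instead make the clean convexity reduction that it suffices to prove the single membership $N\in\CHull(\tilde{x}\cup\{B\})$, since then $\CHull(\tilde{x}\cup\{N\})\subseteq\CHull(\tilde{x}\cup\{B\})$ and $w_N$ is contained in the former by definition; and you get that membership by observing that the ray from $N$ opposite to $B$ enters the cone subtended by $\tilde{x}$ from $N$ and therefore meets the disc at some $Q$ with $N\in[B,Q]$. Your version avoids the paper's somewhat informal continuity argument about intersection points of moving tangent lines and replaces it with the standard facts that the two tangents from an external point bound exactly the set of directions in which rays from that point hit the disc, and that convex hulls are monotone; the only caveat, which you flag yourself, is the degenerate situation where $x$ is tangent to a side of the triangle so that $x_B^{BC}$ or $x_B^{BA}$ coincides with $B$ (in which case $N=B$ and the claim is trivial) --- a case the paper's proof also glosses over.
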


\begin{proof} It is easy to observe that $B$ is located in smaller of two angles formed by rays $[N,x_B^{BA})$ and $[N,x_B^{BC})$, see Figure \ref{fig:Proof4.5}. For each point located in that angle, the tangent line through that point to the circle does not have points in the border area with $w_N$. Indeed, one can easily check that any tangent line to $x$ at any point belonging to border with $w_N$ is completely inside the larger angle formed by $[N,x_B^{BA})$ and $[N,x_B^{BC})$.

Thus, we have touching points $B_1,B_2$, for the tangent lines from $B$ to circle $x$, on the arc not bordering $w_N$. As a result, arc bordering $w_N$ is a subarc of a smaller of two arcs formed by $B_1$ and $B_2$.

 \begin{figure}[H]
\includegraphics[width=0.2\textwidth]{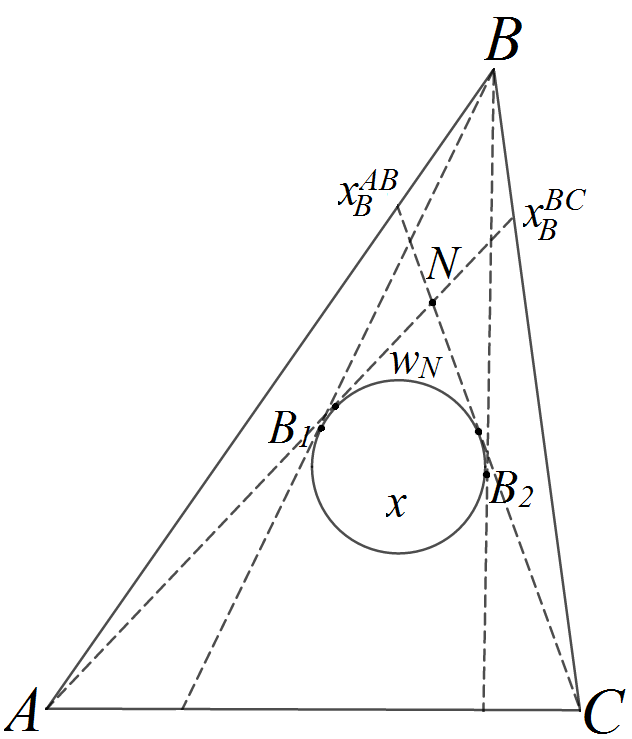}  
\caption{ }
  \label{fig:Proof4.5}
\end{figure}

We claim that any two tangent lines touching circle at two points on the smaller arc $(B_1B_2)$ intersect inside $\triangle B_1BB_2$. In particular, we get $N$ inside this triangle. In order to prove the claim, one could fix one of the tangent lines, say at point $U$ on the smaller arc $(B_1B_2)$. Touching point of line $(Ax_B^{BC})$ can play a role of $U$ on Figure \ref{fig:Proof4.5}. Choose the second line at location of line $(BB_1)$, or line $(BB_2)$, then move this line so that its touching point travels along the smaller arc $(B_1B_2)$, either from $B_1$ to $U$, or from $B_2$ to $U$. Then the point of intersection travels along the segment of the first tangent line inside $\triangle B_1BB_2$.

It follows that the arc bordering $w_N$ as well as point $N$ are inside $\triangle B_1BB_2$, from where $w_N \subseteq \CHull (\tilde{x} \cup\{B\})$ follows. 
\end{proof}

\begin{corollary}\label{suff} Suppose circles $x,y$ are inside $\triangle ABC$, two touching lines to $x$, $(Ax_B^{BC})$ and $(Cx_B^{BA})$, intersect at point $N$, and $y$ is inside $\triangle ACN$. Then any of the following conditions is sufficient for $y \in ch_c (x,A,C)$:
\begin{itemize}
\item[(1)] $y_C^{AC}$ is closer to $C$ than $x_C^{AC}$;
\item[(2)] $y_A^{AC}$ is closer to $A$ than $x_A^{AC}$;
\item[(3)] $y_C^{BC}$ is closer to $C$ than $x_C^{BC}$;
\item[(4)] $y_A^{AB}$ is closer to $A$ than $x_A^{AB}$.
\end{itemize} 
\end{corollary}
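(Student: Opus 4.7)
My plan is to combine Corollary \ref{corollary4} with Lemma \ref{wN}. Decompose $\triangle ACN = \tilde{x}\cup w_N\cup w_{AC}$, where $w_{AC}:=\triangle ACN\setminus(\tilde{x}\cup w_N)$ is the region adjacent to side $[A,C]$; it satisfies $\tilde{x}\cup w_{AC}=\CHull(\tilde{x}\cup\{A,C\})$. Because the sides $[A,N]$ and $[C,N]$ of $\triangle ACN$ lie on tangent lines to $\tilde{x}$, the circle $\tilde{x}$ is inscribed in $\angle ANC$, so Corollary \ref{corollary4} applied inside $\triangle ACN$ guarantees that $y$ cannot meet both $w_N$ and $w_{AC}$. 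Since $y\subseteq\triangle ACN$ by hypothesis, once we establish $y\cap w_{AC}\neq\emptyset$ we get $y\cap w_N=\emptyset$ and hence $y\subseteq\tilde{x}\cup w_{AC}=\CHull(\tilde{x}\cup\{A,C\})$, i.e.\ $y\in ch_c(x,A,C)$.

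So the task reduces to showing that each of (1)--(4) forces $y\cap w_{AC}\neq\emptyset$. I would argue by contradiction: if $y\cap w_{AC}=\emptyset$, then $y\subseteq\tilde{x}\cup w_N$, and Lemma \ref{wN} gives $y\subseteq\CHull(\tilde{x}\cup\{B\})$. The key geometric fact I would then use is that the central projection of $\CHull(\tilde{x}\cup\{B\})$ from a vertex $V$ of $\triangle ABC$ onto the opposite side is precisely the convex hull on that side of the individual shadows of $\tilde{x}$ and of $B$, because every point of $\CHull(\tilde{x}\cup\{B\})$ other than $B$ lies on a segment joining $B$ to some point of $\tilde{x}$.

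For (1), projecting from $B$ onto $AC$: lines from $B$ through any non-$B$ point of the hull coincide with lines from $B$ through $\tilde x$, so the shadow of $y$ is contained in the shadow of $\tilde{x}$ alone, namely $[x_A^{AC},x_C^{AC}]$, contradicting the assumption that $y_C^{AC}$ is closer to $C$ than $x_C^{AC}$. For (3), projecting from $A$ onto $BC$: the shadow of $\tilde{x}$ is $[x_B^{BC},x_C^{BC}]$, while the shadow of $B$ is $B$ itself (since line $AB$ meets $BC$ at $B$); their convex hull on $BC$ is the interval $[B,x_C^{BC}]$, whose $C$-endpoint is still $x_C^{BC}$, contradicting the assumption that $y_C^{BC}$ is closer to $C$ than $x_C^{BC}$. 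Conditions (2) and (4) then follow from the $A\leftrightarrow C$ symmetric arguments of (1) and (3) respectively.

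The step that requires the most care is the shadow computation for projection from $A$ onto $BC$ (and symmetrically from $C$ onto $AB$), where the extra generator $B$ contributes a nondegenerate shadow; the saving grace is that this shadow is simply the point $B$, which sits at the $B$-end of $BC$ and therefore cannot push the shadow past $x_C^{BC}$ on the $C$-side. A secondary sanity check is that if $y$ were to lie inside $\tilde{x}$ or overlap it, none of (1)--(4) could hold (the same shadow bound applied to $y\subseteq\tilde{x}$ yields $y_C^{AC}\leq x_C^{AC}$, etc.), so those degenerate configurations need no separate treatment.
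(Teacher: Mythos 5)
Your proof is correct and follows essentially the same route as the paper: the same decomposition of $\triangle ACN$ into $\tilde{x}$, $w_N$ and $w_{AC}$, the same use of Corollary \ref{corollary4} to rule out meeting both regions, and the same contradiction via Lemma \ref{wN} and the projection of $\CHull(\tilde{x}\cup\{B\})$. The only (harmless) variation is that for conditions (3) and (4) the paper argues directly from $w_N\subseteq\triangle Ax_C^{BC}B$ (resp.\ $w_N\subseteq\triangle Cx_A^{AB}B$), whereas you run the same shadow computation uniformly through $\CHull(\tilde{x}\cup\{B\})$.
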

\begin{proof} Let $w_N$ be an area inside $\triangle ACN$ defined in Lemma \ref{wN}, and $w_{AC}$ is the disjoint from $w_N$ area outside $x$ and inside $\triangle ACN$. According to Corollary \ref{corollary4} circle $y$ may have non-empty intersection only with one of areas $w_N$, $w_{AC}$. It follows from Lemma \ref{wN} that in case (1) or (2), $y$ cannot avoid intersecting with $w_{AC}$. Indeed, otherwise, $y \subseteq  \CHull (\tilde{x} \cup\{B\})$, and projection of $y$ onto $AC$ would be inside projection of $x$ onto $AC$, which contradicts the assumption in cases (1) and (2). Therefore, $y\cap w_N = \emptyset$ and  $y \in ch_c (x,A,C)$.

Evidently, $w_N \subseteq \triangle Ax_C^{BC} B$, therefore, in case (3) $y$ should have intersection with $w_{AC}$. Similarly, $w_N \subseteq \triangle Cx_A^{AB}B$, therefore, in case (4) $y$ should also have intersection with $w_{AC}$. Hence, in any of these cases we get the same conclusion:  $y \in ch_c (x,A,C)$.  
\end{proof}

\section{Weak $2\times 3$-Carousel rule for a geometry of circles on a plane} \label{sectionWCP}

   \begin{theorem}\label{theorem2}
   Every convex geometry of circles $(F, ch_c)$ in $\mathbb{R}^2$ satisfies the Weak $2\times 3$-Carousel rule. 
   \end{theorem}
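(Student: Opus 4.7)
The plan is to reduce Theorem \ref{theorem2} to Theorem \ref{theorem1} by circumscribing the three circles of $S=\{c_1,c_2,c_3\}$ with a triangle and then transferring the conclusion back to the circles themselves. Write $K=\CHull(\tilde c_1\cup\tilde c_2\cup\tilde c_3)$, so that $\tilde x,\tilde y\subseteq K$ by hypothesis. If one of the circles, say $c_3$, already satisfies $\tilde c_3\subseteq\CHull(\tilde c_1\cup\tilde c_2)$, then $K=\CHull(\tilde c_1\cup\tilde c_2)$ and $\tilde x\subseteq K\subseteq\CHull(\tilde y\cup\tilde c_1\cup\tilde c_2)$, so $x\in ch_c(\{y,c_1,c_2\})$ and we are done. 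Hence I assume that each $c_i$ contributes a proper arc to $\partial K$, so that $\partial K$ consists of three circular arcs and three common outer tangent segments $T_{12},T_{23},T_{13}$, one between each pair.

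Next I extend the three tangent segments of $\partial K$ to full lines. In generic position they bound a triangle $\triangle ABC$ whose vertices I label so that $c_A$ is the circle of $S$ inscribed in $\angle A$ (tangent to the two sides of the triangle meeting at $A$) and similarly for $c_B,c_C$, and so that the tangent segment $T_{AB}$ of $c_A$ and $c_B$ lies on side $\overline{AB}$. Since $K\subseteq\triangle ABC$, we have $\tilde x,\tilde y\subseteq\triangle ABC$, so Theorem \ref{theorem1} applied to $\triangle ABC$ and the circles $x,y$ yields, without loss of generality, vertices $U,V\in\{A,B,C\}$ with $\tilde x\subseteq\CHull(\tilde y\cup\{U,V\})$; I relabel so that $U=A$ and $V=B$.

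The decisive step will be the geometric inclusion
\[
\CHull(\tilde y\cup\{A,B\})\cap K \ \subseteq\ \CHull(\tilde y\cup\tilde c_A\cup\tilde c_B),
\]
which I would isolate as a lemma. Given $p$ in the left-hand side, I write $p=\alpha y_0+(1-\alpha)a$ with $y_0\in\tilde y\subseteq K$, $a\in\overline{AB}$ and $\alpha\in[0,1]$. Since $K$ is convex and $y_0\in K$, the intersection $\overline{y_0 a}\cap K$ is a subsegment $\overline{y_0 q}$ containing $p$, with $q=a$ when $a\in K$ and $q\in\partial K$ otherwise. If $a\in K$ then $a\in\overline{AB}\cap K=T_{AB}\subseteq\CHull(\tilde c_A\cup\tilde c_B)$; otherwise $a$ lies in one of the curvy corner regions of $\triangle ABC\setminus K$ adjacent to $A$ or to $B$, so $\overline{y_0 a}$ exits $K$ by crossing the bounding arc of $c_A$ or of $c_B$, giving $q\in\tilde c_A\cup\tilde c_B$. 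In every case $q\in\CHull(\tilde c_A\cup\tilde c_B)$, and hence $p\in\overline{y_0 q}\subseteq\CHull(\tilde y\cup\tilde c_A\cup\tilde c_B)$. Combined with $\tilde x\subseteq K$, this gives $\tilde x\subseteq\CHull(\tilde y\cup\tilde c_A\cup\tilde c_B)$, i.e., $x\in ch_c(\{y,c_A,c_B\})$, which is the Weak $2\times 3$-Carousel rule for $S$.

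The main obstacle will be verifying rigorously that the exit point $q$ in the lemma can only lie on the arc of $c_A$, the arc of $c_B$, or on $T_{AB}$, and never on the arc of $c_C$ or on the tangent segments $T_{AC},T_{BC}$ that border the opposite vertex $C$. The argument uses the fact that $\overline{y_0 a}$ is directed toward side $\overline{AB}$ and remains inside the convex triangle $\triangle ABC$, together with the inscribed-circle structure of $c_C$ near the far vertex, so the segment cannot escape $K$ through the far side. A secondary concern is degenerate position (e.g.\ two outer tangent lines parallel, or three concurrent), in which the circumscribing triangle fails to exist in the classical sense; such cases can be handled either by a limiting perturbation of one of the $c_i$ or by reducing to the redundant-circle case dispatched at the start.
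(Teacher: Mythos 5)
Your proposal is correct and follows essentially the same route as the paper: circumscribe the three circles by the triangle formed by their pairwise outer common tangent lines, invoke Theorem \ref{theorem1} for the two circles inside that triangle, and then pull the conclusion back from the vertices $\{A,B\}$ to the circles $c_A,c_B$ by showing the relevant corner regions of $\triangle ABC\setminus K$ cannot be entered. Your segment-exit lemma is just a more explicit justification of the inclusion the paper states directly as $\CHull(\tilde{x}\cup\{B,C\})\setminus(w_B\cup w_C)\subseteq\CHull(\tilde{x}\cup\tilde{b}\cup\tilde{c})$, so no substantive difference.
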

   
  \begin{proof}[Proof]
   \begin{figure}[H]
\includegraphics[width=0.3\textwidth]{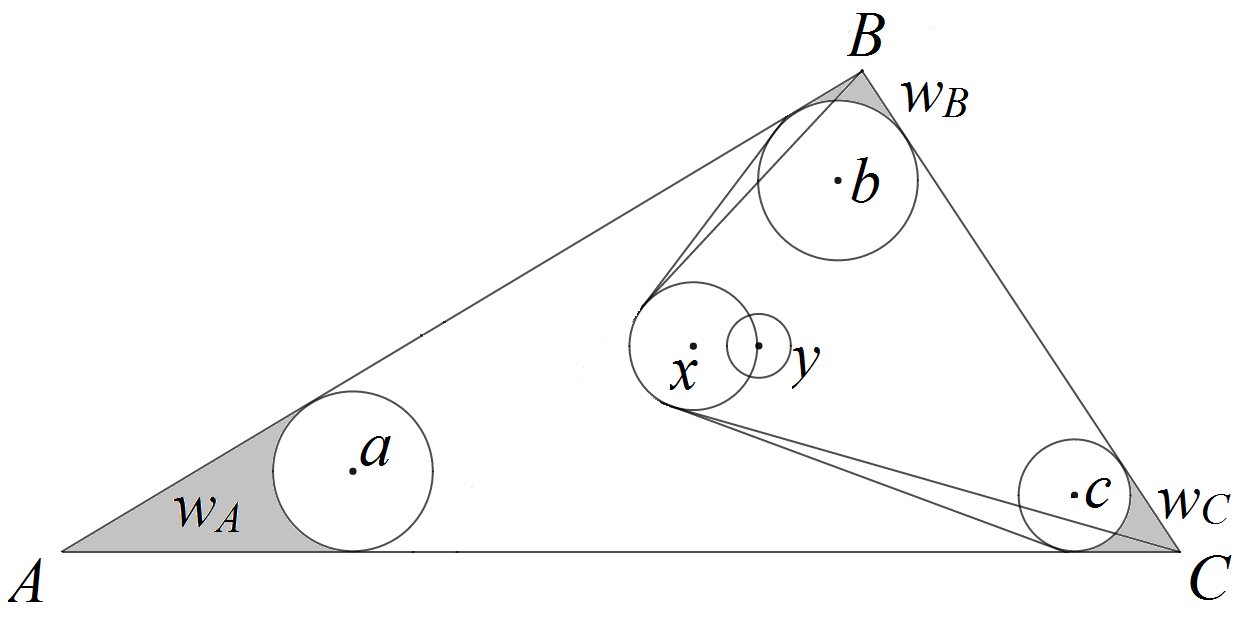}  
\caption{ }
  \label{fig:WeakCarousel}
\end{figure}

Consider circles $x,y, a,b,c \in F$ such that $x, y \in ch_c(\{a,b,c\})$. Then $\tilde{x}, \tilde{y} \subseteq \CHull (\tilde{a}\cup \tilde{b}\cup\tilde{c})$,  see Figure \ref{fig:WeakCarousel}.

If $\CHull (\tilde{a}\cup \tilde{b}\cup\tilde{c}) = \CHull (\tilde{a}\cup \tilde{b})$, then conclusion of the statement obviously holds. Thus, we may assume that none of the circles is in the convex hull of two others.

Draw common tangent lines to pairs of circles $a, b, c$ to obtain a triangle where circles are inscribed into three angles, let the vertices of triangle be $A, B$ and $C$. Denote $w_A, w_B, w_C$ disjoint areas inside $\Delta ABC$ and outside  $\CHull (\tilde{a}\cup \tilde{b}\cup\tilde{c})$, as shown on the picture. 
Then $ch_c(\{a,b,c\})=\CHull \{A,B,C\}\backslash (w_A\cup w_B\cup w_C)$, so $\tilde{x}, \tilde{y} \subseteq \CHull\{A,B,C\}$.
 
By Theorem \ref{theorem1}, either $x$ is in a convex hull of two points from $S$ and $y$, or $y$ is in a convex hull of two points from $S$ and $x$. 

Say, $\tilde{y}\subseteq  \CHull (\tilde{x} \cup\{B,C\})$. Apparently, $y \cap w_B = \emptyset = y\cap w_C$, and $\CHull (\tilde{x} \cup\{B,C\})\setminus (w_B \cup w_C) \subseteq  \CHull (\tilde{x}\cup \tilde{b}\cup \tilde{c})$. Therefore, $\tilde{y}\subseteq  \CHull (\tilde{x} \cup\tilde{b}\cup \tilde{c})$, or $y \in ch_c( \{x,b,c\})$, which is needed. 

It follows that the Weak $2\times3$-Carousel rule holds for convex geometry of circles.  

\end{proof}

\section {Example of convex geometry that is not representable by circles}\label{Example}

We borrow examples of convex geometries in this section from \cite{Adaricheva}. 
To simplify notation, sets of points $\{x,y,z\}$ will be denoted simply $xyz$ etc.

Consider an example of affine convex geometry $G'=(X, \mathcal{F'})$, where \\$X=\{a_0,a_1,a_2, x, y\}$ is a set of points on a plane as shown in Figure \ref{fig:ExampleAffine}. Then, $\mathcal{F'}=\mathcal{P}(X)\backslash \{a_0a_1a_2, a_0a_2x, a_0a_1y, a_0a_1a_2x, a_0a_1a_2y \}$. 

\begin{figure}[h!]
\includegraphics[width=0.3\textwidth]{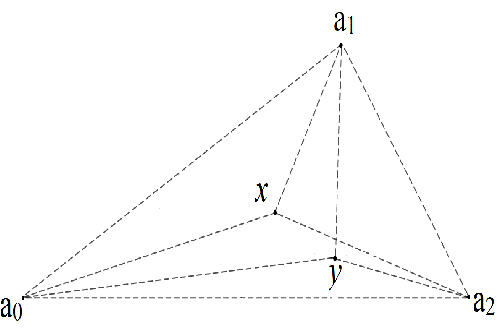}  
\caption{ }
  \label{fig:ExampleAffine}
\end{figure}

Consider $G=(X, \mathcal{F})$, where $\mathcal{F}=\mathcal{F}' \cup \{a_0a_2x, a_0a_1y\}= \mathcal{P}(X)\backslash \{ a_0a_1a_2, a_0a_1a_2x, a_0a_1a_2y\}$.

It could be directly checked  from Definition \ref{Alignment} that $\mathcal{F}$  satisfies the properties of alignments on ground set $X$. Initial alignment $\mathcal{F'}$ satisfies the required properties since $G'$ is affine convex geometry. 
All subsets of added sets are already in $\mathcal{F}'$, in particular their intersections with other sets from $\mathcal{F}$ are also in $\mathcal{F}$. Moreover, one-point extensions for both new sets exist in $\mathcal{F}'$. So, $G$ is, indeed, a convex geometry. 

From \cite{Adaricheva}, $G=(X, \mathcal{F})$ does not satisfy the 2-Carousel rule. It was shown in \cite{Adaricheva} that convex geometries failing $n$-Carousel rule could not be weakly represented by affine convex geometries in $\mathbb{R}^n$. So, $G$ could not be weakly represented by affine convex geometries in $\mathbb{R}^2$. Now, we show that $G$ does not also satisfy the Weak 2-Carousel rule. 

Let $\varphi : 2^X \rightarrow 2^X$ be a corresponding closure operator for alignment $F$ on $X$.

Then, $x, y\in \varphi (a_0a_1a_2)$ since $\varphi (a_0a_1a_2)=X$. Moreover, 
$\varphi (a_ia_jx)=a_ia_jx$ and $\varphi (a_ia_jy)=a_ia_jy$, for $i,j=0,1,2$.

It follows that neither $x$ nor $y$ is in the closure of the other together with any two members of $\{a_0,a_1,a_2\}$. Hence, $G$ does not satisfy the Weak $2\times 3$-Carousel rule, and thus, it also fails the Weak $2$-Carousel rule.  

We prove in Section \ref{sectionWCP} that any geometry of circles satisfies the Weak $2\times 3$-Carousel rule (Theorem \ref{theorem2}). Therefore, $G$ cannot be represented by circles on a plane.    

\begin{remark}
The given example is minimal in the cardinality of the base set as well as the cardinality of its alignment, for which the Weak $2\times 3$-Carousel rule fails. 
\end{remark}
Indeed, the failure of the rule assumes that we have at least 5 elements in the base set with  $x, y\in \varphi (a_0a_1a_2)$, while a closure of any of $x,y$ with two points among $a_0,a_1,a_2$ will not contain second point. Given example is a closure system defined by just one \emph{implication} $a_0a_1a_2\rightarrow xy$, in other words, there are only three subsets of the base set: $\{a_0,a_1,a_2\}$, $\{a_0,a_1,a_2, x\}$ and $\{a_0,a_1,a_2, y\}$ - for which the closure is strictly larger than itself. The alignment of any other closure system with five elements and implication $a_0a_1a_2\rightarrow xy$ will be a subset of alignment of above example, and it should have more implications in its basis. It is easy to verify that, with any other implication in the basis, one of implications $xa_ia_j \rightarrow y$ or $ya_ia_j \rightarrow x$ will also hold in the closure system.

\section{Concluding Remarks}\label{CR}

We demonstrated an example of convex geometry of $\cdim=6$ that fails the Weak $2\times 3$-Carousel rule and therefore could not be represented by circles on a plane. 
This convex geometry fails the 2-Carousel property, so it is also not weakly represented by affine convex geometries on a plane (\cite{Adaricheva}). 
Hence, we ask the following problem:

\begin{problem}\label{1st}
Is every convex geometry of $\cdim=3,4$ or $5$ strongly represented by a geometry of circles on a plane? 
\end{problem}

Similarly, we do not know whether such geometries have a weak representation on a plane.\\

Since we showed the existence of geometries that are not strongly representable by circles in $\mathbb{R}^2$, we would like to consider higher dimensions of space. Therefore, we ask the following:

\begin{problem}\label{2d}
Can every finite convex geometry be strongly represented by balls in $\mathbb{R}^n$?
\end{problem}
 
In \cite{RichRog}, parameter $\dim (G)$ for convex geometry $G$ indicates the smallest dimension $n$ of space $\mathbb{R}^n$, for which a weak representation of $G$ exists. Similarly, we can define $\dim_c (G)$, a smallest dimension of space for which the strong representation by circles exists.

\begin{problem}\label{3d}
For any geometry $G$ that has strong representation by the balls,  
is it true that
$\dim_c(G) \leq \dim (G)$?
\end{problem}

Consider an example of a convex geometry $G$ for which $\dim_c(G) < \dim (G)$. Take a convex geometry of circles with elements of the base set $x,y, A,B,C$ as in Figure \ref{fig:S3}. Then, of course, the Weak $2\times 3$-Carousel rule holds, but the $2$-Carousel property fails, because $x \not \in ch_c (y,A,B) \cup ch_c (y,B,C) \cup ch_c (y,A,C)$. Then, according to the result of \cite{Adaricheva}, this convex geometry does not have a weak representation by points on a plane. Therefore, $\dim_c(G) = 2 < \dim (G)$. 

Even in cases when $\dim_c(G) = \dim (G)$, the weak representation may imply the need for a larger base of geometry $G'$ for the weak representation. For example, consider  \emph{atomistic} convex geometry on the base set $X=\{a,b,x,y\}$ defined by a single implication $ab\rightarrow xy$. Then it has a weak representation by points and strong representation by circles, both on a plane. But a geometry of points on a plane $G'$ for a weak representation will require at least 5 points in the base set.

Figure \ref{fig:CirclesExStrong} shows its representation by circles on a plane. It is easy to check that this geometry cannot be strongly represented by affine convex geometry on a plane (Points representing $x$ and $y$ should be on one line between points representing $a$ and $b$, but in this case $x$ is in a closure of, say, $\{a,y\}$ and $y$ is in a closure of $\{b,x\}$ (or, similarly, with $a$ and $b$ switched), which is not true for the geometry defined by $ab\rightarrow xy$). It can be verified that this geometry may still be weakly presented by a 5-point configuration on the plane. 

\begin{figure}[H]
\includegraphics[width=0.3\textwidth]{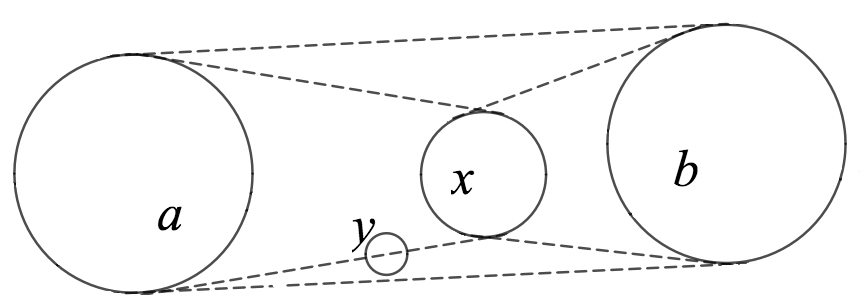}  
\caption{ }
  \label{fig:CirclesExStrong}
\end{figure}

Finally, we note that the geometry of circles introduced in \cite{Czedli} can be generalized to geometry based on other convex 2-dimensional shapes. Namely, the following statement follows the same proof as \cite[Proposition 2.1]{Czedli}.

\begin{proposition} Consider bounded convex shapes on a plane whose borders are defined by family of equations $f(u,v)=0$, with finitely many parameters and smooth function $f$, such that finitely many points on the curve may recover the parameters fully. Take finitely many of such shapes defined by equations from the same family, and define the closure operator on this base set similar to $\varphi_c$ for circles. Then the closure system will be a convex geometry. 
\end{proposition}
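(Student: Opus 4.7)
My plan is to follow Cz\'edli's argument for circles almost verbatim, adapting it to the general family of shapes defined by $f(u,v)=0$ with the stated identifiability property. The definition of the closure operator is $\varphi(Y)=\{z\in X:\tilde z\subseteq \CHull(\bigcup_{y\in Y}\tilde y)\}$, and I need to check that $\varphi$ is a closure operator with $\varphi(\emptyset)=\emptyset$ and satisfies the anti-exchange axiom.

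First I would verify the closure axioms. Extensivity and monotonicity follow at once from the corresponding properties of $\CHull$. Idempotence follows because, writing $\tilde Y:=\bigcup_{y\in Y}\tilde y$, every $w\in\varphi(Y)$ satisfies $\tilde w\subseteq \CHull(\tilde Y)$, so the union $\bigcup_{w\in\varphi(Y)}\tilde w$ already sits in the convex set $\CHull(\tilde Y)$ and its convex hull adds nothing; hence $\varphi(\varphi(Y))\subseteq \varphi(Y)$, and the reverse inclusion is automatic. The condition $\varphi(\emptyset)=\emptyset$ is immediate since each bounded shape $\tilde z$ is nonempty and cannot fit into $\CHull(\emptyset)=\emptyset$.

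The essential step is anti-exchange, which I would argue by contradiction. Assume $Y=\varphi(Y)$, distinct $x,z\in X\setminus Y$, and both $z\in\varphi(Y\cup\{x\})$ and $x\in\varphi(Y\cup\{z\})$. These inclusions yield
\[
\CHull(\tilde Y\cup\tilde x)=\CHull(\tilde Y\cup\tilde z)=:K.
\]
Because $x\notin\varphi(Y)=Y$, we have $\tilde x\not\subseteq\CHull(\tilde Y)$, so $K$ strictly enlarges $\CHull(\tilde Y)$. The geometric heart of the argument is that smoothness of $f$ forces $\tilde x$ to contribute a nondegenerate sub-arc of $\partial\tilde x$ to the ``new'' part of $\partial K$, rather than merely isolated contact points. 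Viewing $K$ instead as $\CHull(\tilde Y\cup\tilde z)$, the same sub-arc of $\partial K$ must lie on $\partial\tilde z$, so $\partial\tilde x$ and $\partial\tilde z$ share infinitely many common points. By the identifiability hypothesis, finitely many of these points already pin down all parameters, hence $\tilde x$ and $\tilde z$ have the same parameters, so $\tilde x=\tilde z$ and therefore $x=z$, a contradiction.

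The main obstacle is justifying the geometric claim that the enlargement $K\setminus \CHull(\tilde Y)$ must be delivered along a nondegenerate arc of $\partial\tilde x$ and not merely through isolated tangencies. Smoothness of $f$ is precisely what rules out corners and ensures that whenever $\tilde x$ protrudes outside $\CHull(\tilde Y)$ it does so across an open portion of its boundary, which then necessarily appears on $\partial K$. Once this geometric step is in place, the identifiability clause upgrades ``shared arc'' to ``identical parameters'' and the proof concludes exactly as in the circle case of \cite{Czedli}.
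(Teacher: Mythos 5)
Your proof is correct and follows essentially the same route as the paper, which offers no proof of its own but states that the proposition ``follows the same proof as [Cz\'edli, Proposition 2.1]'' --- that proof is exactly your anti-exchange argument, with ``two distinct circles meet in at most two points'' replaced by the finite-identifiability hypothesis once the shared nondegenerate arc of $\partial\tilde{x}$ and $\partial\tilde{z}$ on the new part of the hull boundary is produced. The one step worth writing out in full is the support-function argument that the protruding arc of $\partial\tilde{x}$ (which exists because smoothness gives a unique outer normal at each boundary point, so an open arc of directions with $h_{\tilde{x}}>h_{\CHull(\tilde{Y})}$ yields more than one support point) must also lie on $\partial\tilde{z}$, via the fact that extreme points of the hull of a union of compact sets belong to that union.
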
  

For circles, $f(u,v) = u^2+v^2 - r^2$ with parameter $r$, and three points on the curve recover parameter $r$. For ellipse, $f(u,v)=\frac{u^2}{a^2} + \frac{v^2}{b^2} - 1$ with parameters $a,b$, and any 5 points on the curve can recover the parameters, thus, the curve itself.

The result is not true in general, though, for the shapes whose border is not smooth: for example, it will fail for polygons. Related results in case of segments were established in \cite{Adar2004}.

Further generalizations for representation of convex geometries by (convex) shapes can be foreseen in the future, and we may consider them in our follow-up paper.

\section{Acknowledgements}
We are grateful for the financial support of Nazarbayev University to travel to SIAM Discrete Mathematics conference in Atlanta, US, June 6-10, 2016, where the results of this paper were presented. We were helped by M. K. Adarichev in editing the paper.

\newpage
\section{Appendices}
\appendix

\section{}\label{App:AppendixA}

 \begin{figure}[H]
\includegraphics[scale=0.3]{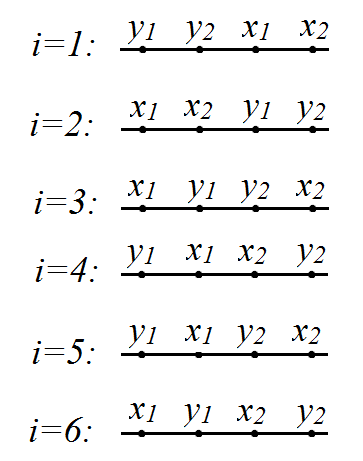}  
\caption{ }
  \label{fig:Segments}
\end{figure}

\section{}\label{App:AppendixB}

\begin{multicols}{2}

$S_1=\{C_{121},C_{122},C_{112},C_{221},C_{211},C_{212}\}$
\\$S_2=\{C_{111},C_{222}\}$

$S_3=\{C_{123},C_{142},C_{214},C_{231}, C_{312},C_{421}\}$
\\$S_4=\{C_{124},C_{132},C_{213},C_{241}, C_{321},C_{412}\}$

$S_5=\{C_{113},C_{131},C_{224},C_{242}, C_{311},C_{422}\}$
\\$S_6=\{C_{114},C_{141},C_{223},C_{232}, C_{322},C_{411}\}$

$S_7=\{C_{125},C_{162},C_{216},C_{251},C_{512},C_{621}\}$
\\$S_8=\{C_{126},C_{152},C_{215},C_{261},C_{521},C_{612}\}$

$S_9=\{C_{115},C_{151},C_{226},C_{262},C_{511},C_{622}\}$
\\$S_{10}=\{C_{116},C_{161},C_{225},C_{252},C_{522},C_{611}\}$

$S_{11}=\{C_{133},C_{244},C_{313},C_{331},C_{424},C_{442}\}$
\\$S_{12}=\{C_{134},C_{243},C_{324},C_{341},C_{413},C_{432}\}$

$S_{13}=\{C_{143},C_{234},C_{314},C_{342},C_{423},C_{431}\}$
\\$S_{14}=\{C_{144},C_{233},C_{323},C_{332},C_{414},C_{441}\}$

$S_{15}=\{C_{135},C_{246},C_{351},C_{462},C_{513},C_{624}\}$
\\$S_{16}=\{C_{136},C_{245},C_{361},C_{452},C_{524},C_{613}\}$

$S_{17}=\{C_{145},C_{236},C_{362},C_{451},C_{514},C_{623}\}$
\\$S_{18}=\{C_{146},C_{235},C_{352},C_{461},C_{523},C_{614}\}$

$S_{19}=\{C_{163},C_{254},C_{316},C_{425},C_{542},C_{631}\}$
\columnbreak

$S_{20}=\{C_{164},C_{253},C_{325},C_{416},C_{532},C_{641}\}$
\\$S_{21}=\{C_{153},C_{264},C_{315},C_{426},C_{531},C_{642}\}$

$S_{22}=\{C_{154},C_{263},C_{326},C_{415},C_{541},C_{632}\}$
\\$S_{23}=\{C_{165},C_{256},C_{516},C_{562},C_{625},C_{651}\}$

$S_{24}=\{C_{166},C_{255},C_{525},C_{552},C_{616},C_{661}\}$
\\$S_{25}=\{C_{155},C_{266},C_{551},C_{515},C_{626},C_{662}\}$

$S_{26}=\{C_{156},C_{265},C_{526},C_{561},C_{615},C_{652}\}$
\\$S_{27}=\{C_{333},C_{444}\}$

$S_{28}=\{C_{334},C_{343},C_{344},C_{433},C_{434},C_{443}\}$
\\$S_{29}=\{C_{335},C_{353},C_{446},C_{464},C_{533},C_{644}\}$

$S_{30}=\{C_{336},C_{363},C_{445},C_{454},C_{544},C_{633}\}$
\\$S_{31}=\{C_{345},C_{364},C_{436},C_{453},C_{534},C_{643}\}$

$S_{32}=\{C_{346},C_{354},C_{435},C_{463},C_{543},C_{634}\}$
\\$S_{33}=\{C_{365},C_{456},C_{536},C_{564},C_{645},C_{653}\}$

$S_{34}=\{C_{366},C_{455},C_{545},C_{554},C_{663},C_{636}\}$
\\$S_{35}=\{C_{355},C_{466},C_{535},C_{553},C_{646},C_{664}\}$

$S_{36}=\{C_{356},C_{465},C_{546},C_{563},C_{635},C_{654}\}$
\\$S_{37}=\{C_{565},C_{556},C_{566},C_{655},C_{656},C_{665}\}$

$S_{38}=\{C_{555},C_{666}\}$
\end{multicols}

\newpage
\section{}\label{App:AppendixC}

\begin{multicols}{3}

 \begin{figure}[H]
\includegraphics[width=0.3\textwidth]{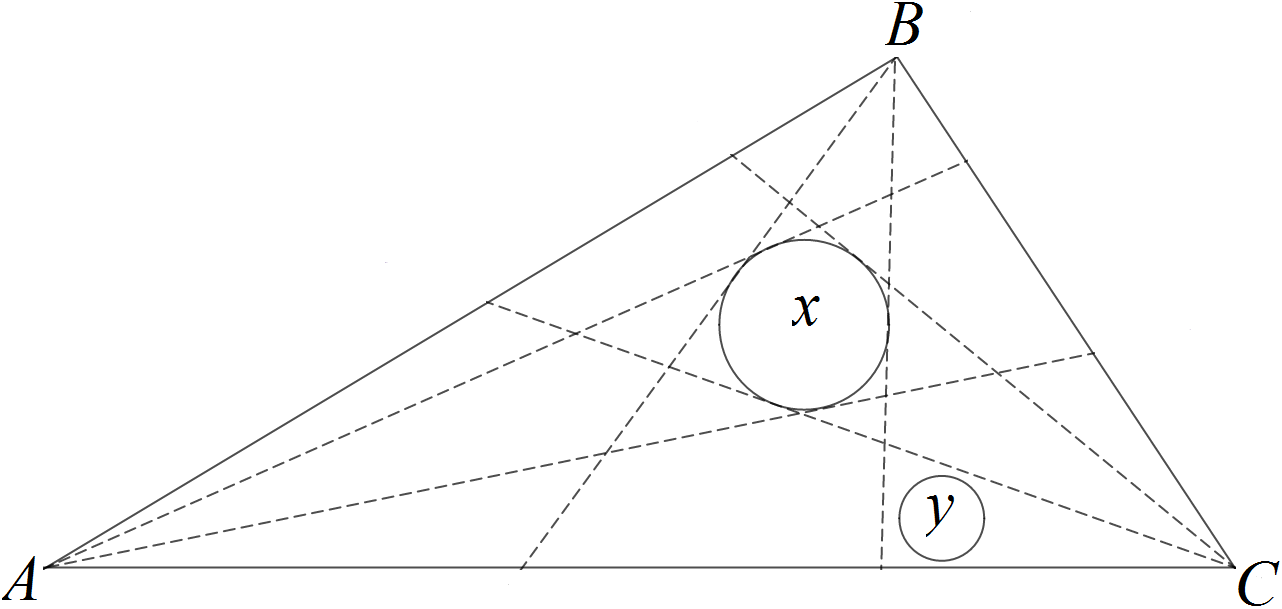}  
\caption{$S_{1}$}
  \label{fig:S1}
  \end{figure}
  
 \begin{figure}[H]
\includegraphics[width=0.3\textwidth]{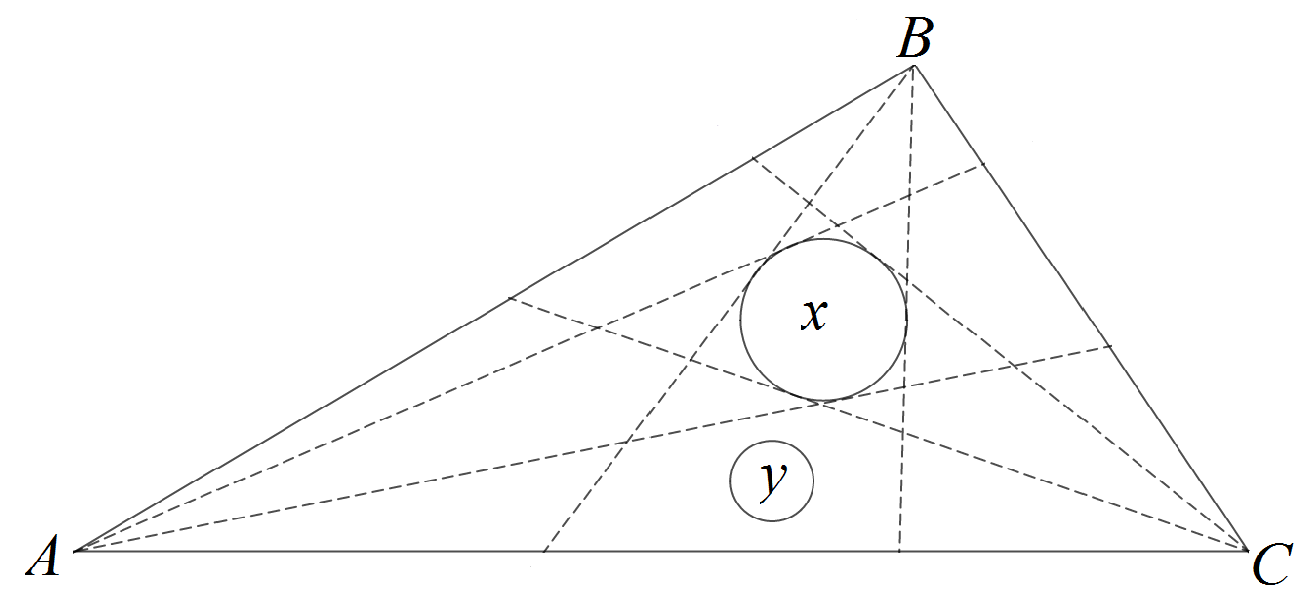}  
\caption{$S_{3}$}
  \label{fig:S3}
  \end{figure}

   \begin{figure}[H]
\includegraphics[width=0.3\textwidth]{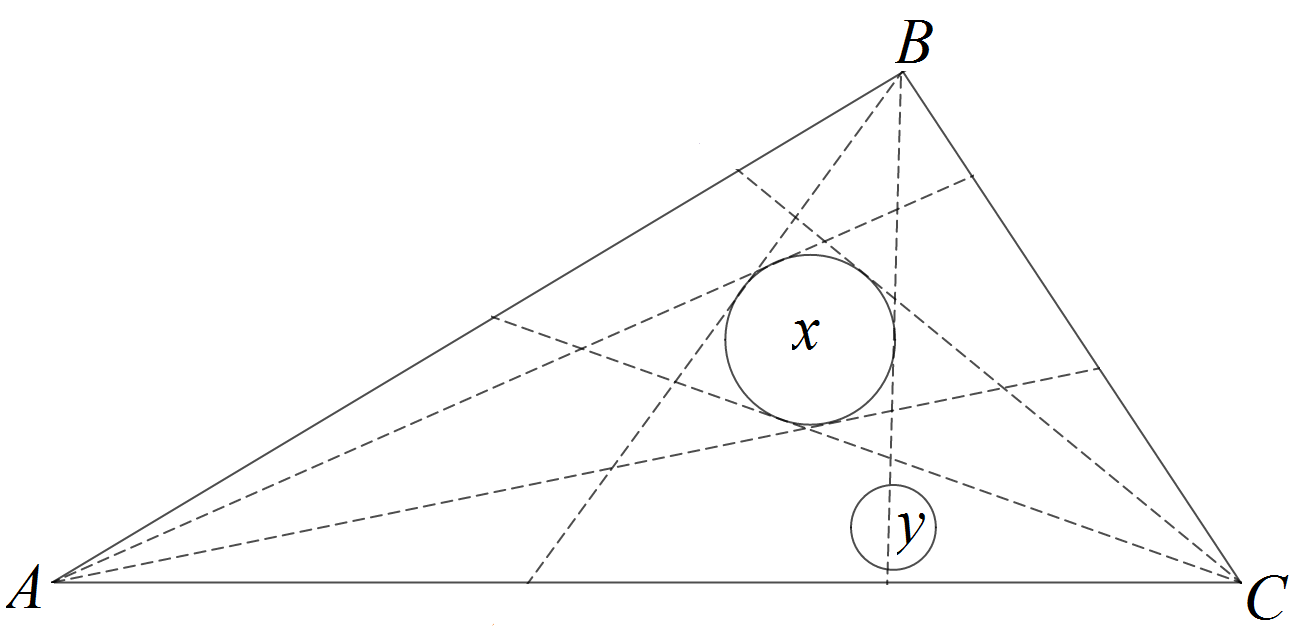}  
\caption{$S_{7}$}
  \label{fig:S7}
\end{figure}
 
 \begin{figure}[H]
\includegraphics[width=0.3\textwidth]{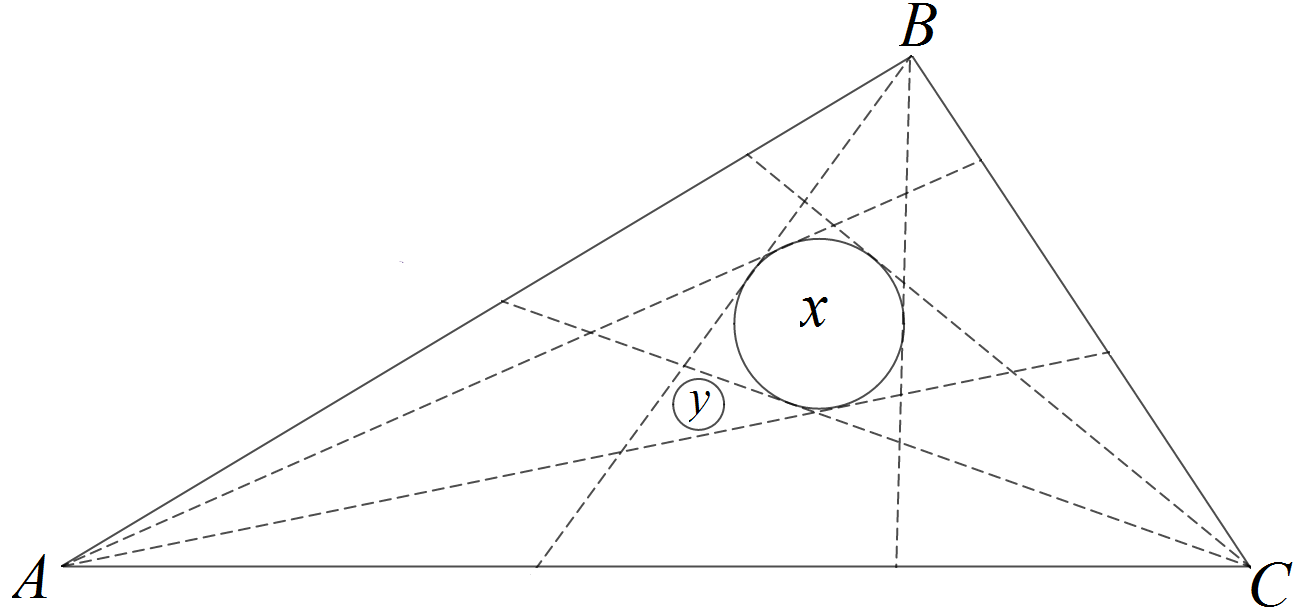}  
\caption{$S_{11}$}
  \label{fig:S11}
  \end{figure}
  
  \columnbreak
 
   \begin{figure}[H]
\includegraphics[width=0.3\textwidth]{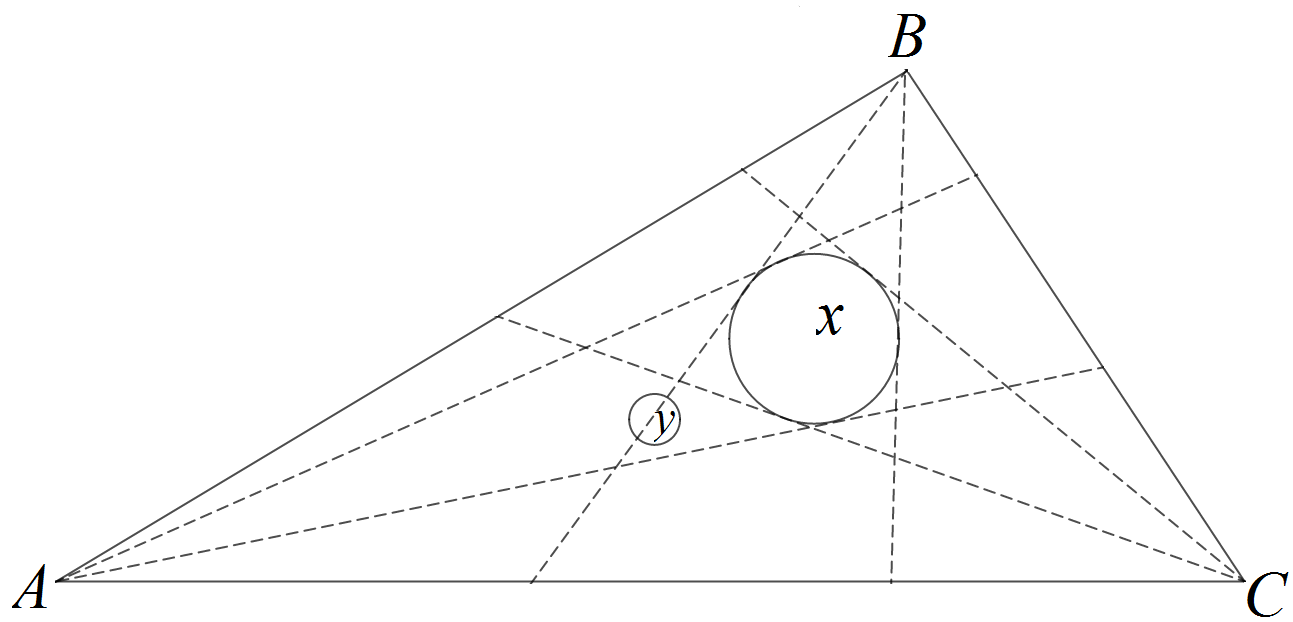}  
\caption{$S_{16}$}
  \label{fig:S16}
  \end{figure}
 
   \begin{figure}[H]
\includegraphics[width=0.3\textwidth]{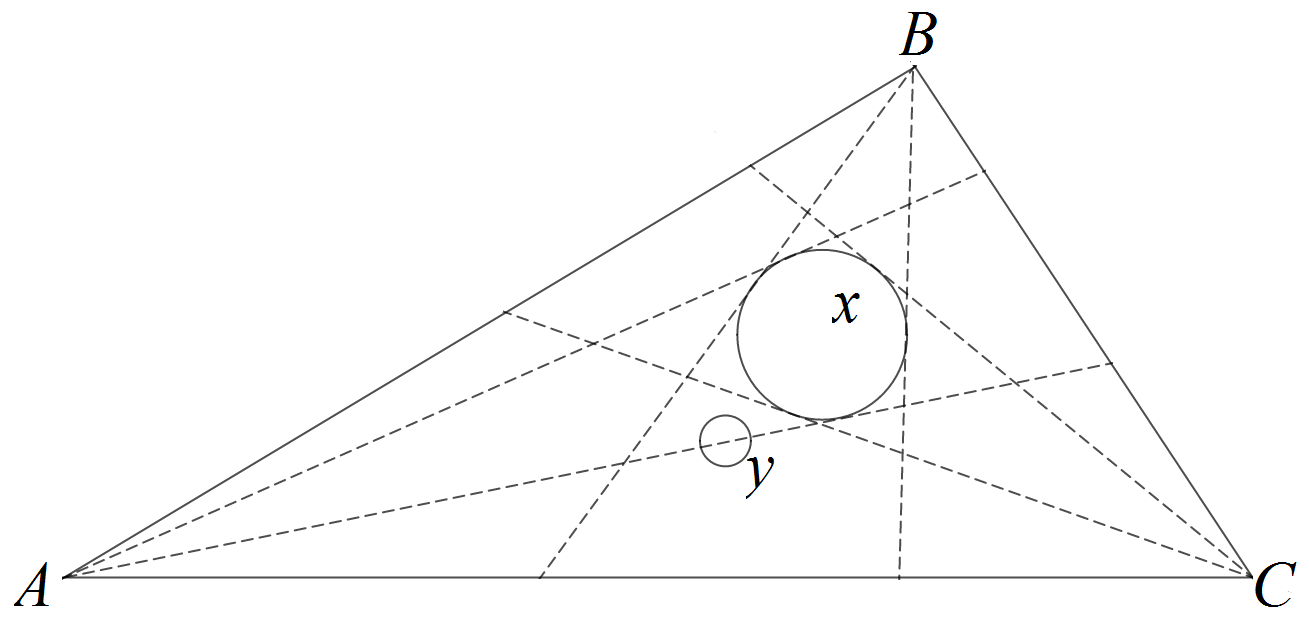}  
\caption{$S_{19}$}
  \label{fig:S19}
  \end{figure}
 
   \begin{figure}[H]
\includegraphics[width=0.3\textwidth]{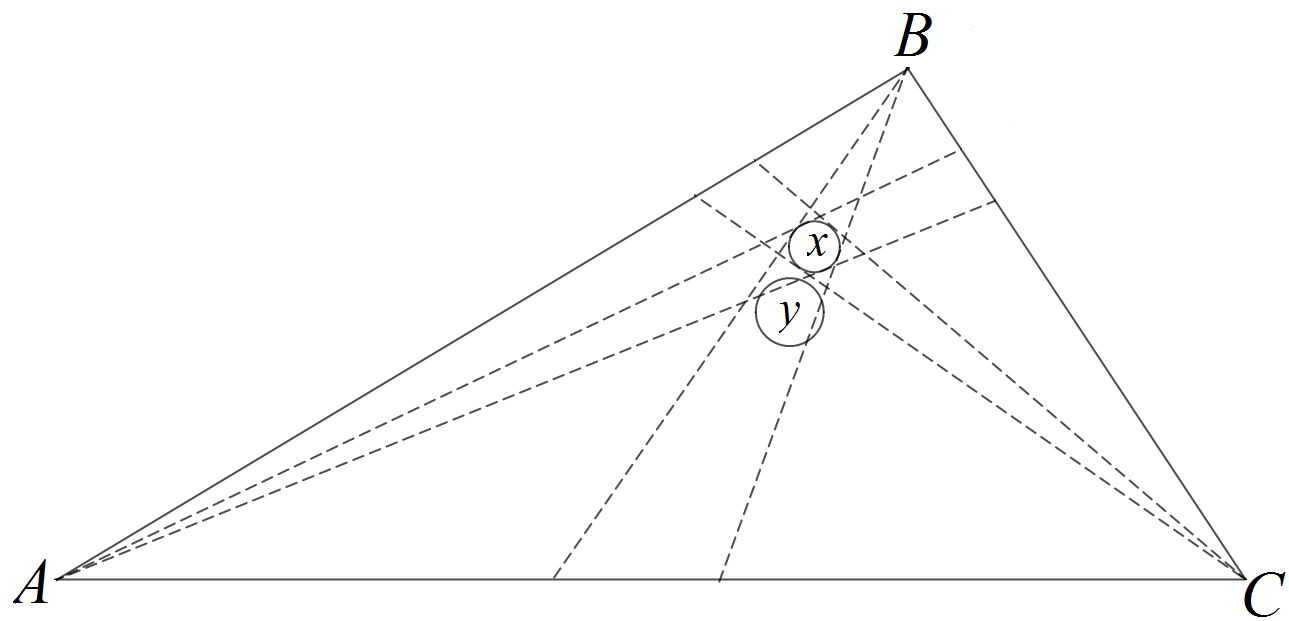}  
\caption{$S_{23}$}
  \label{fig:S23}
  \end{figure}
 
   \begin{figure}[H]
\includegraphics[width=0.3\textwidth]{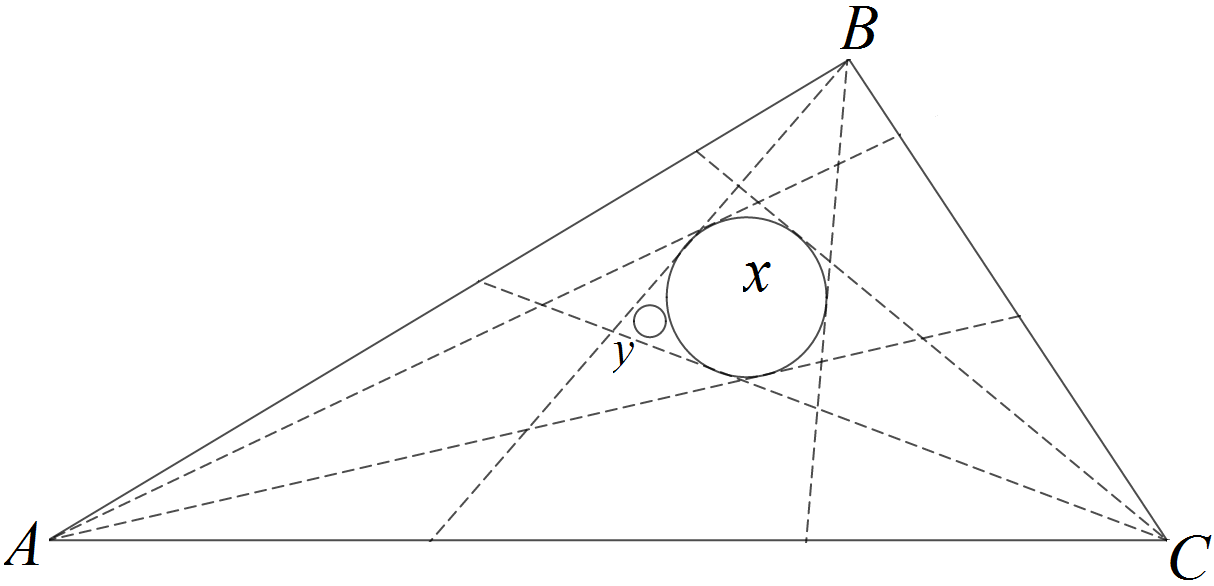}  
\caption{$S_{27}$}
  \label{fig:S27}
  \end{figure}
 
 \columnbreak
 
   \begin{figure}[H]
\includegraphics[width=0.3\textwidth]{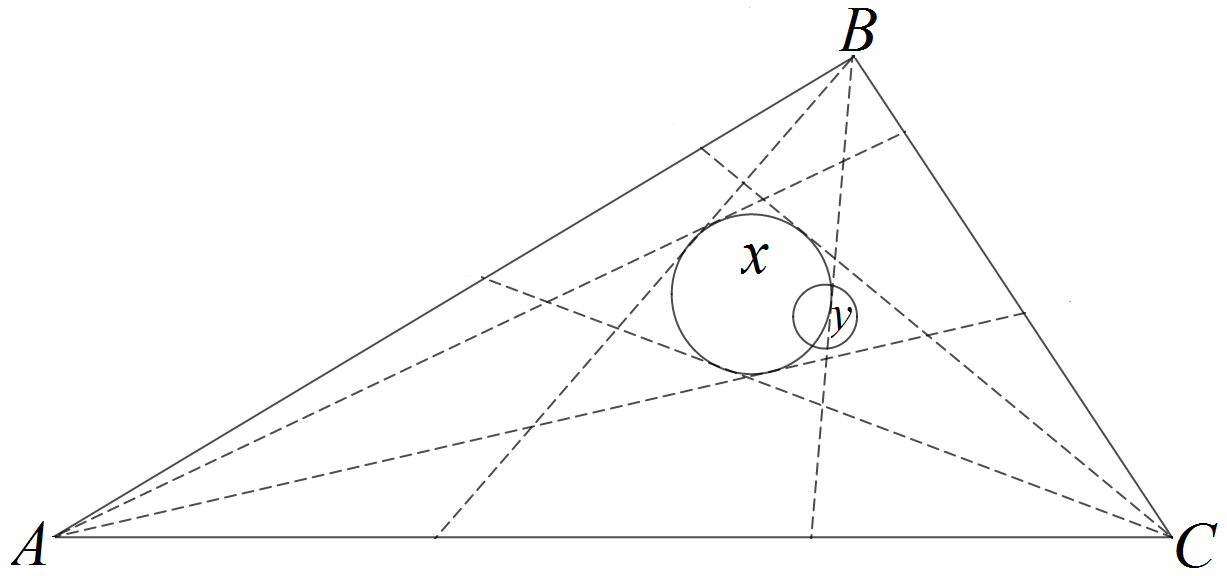}  
\caption{$S_{29}$}
  \label{fig:S29}
  \end{figure}
    \begin{figure}[H]
\includegraphics[width=0.3\textwidth]{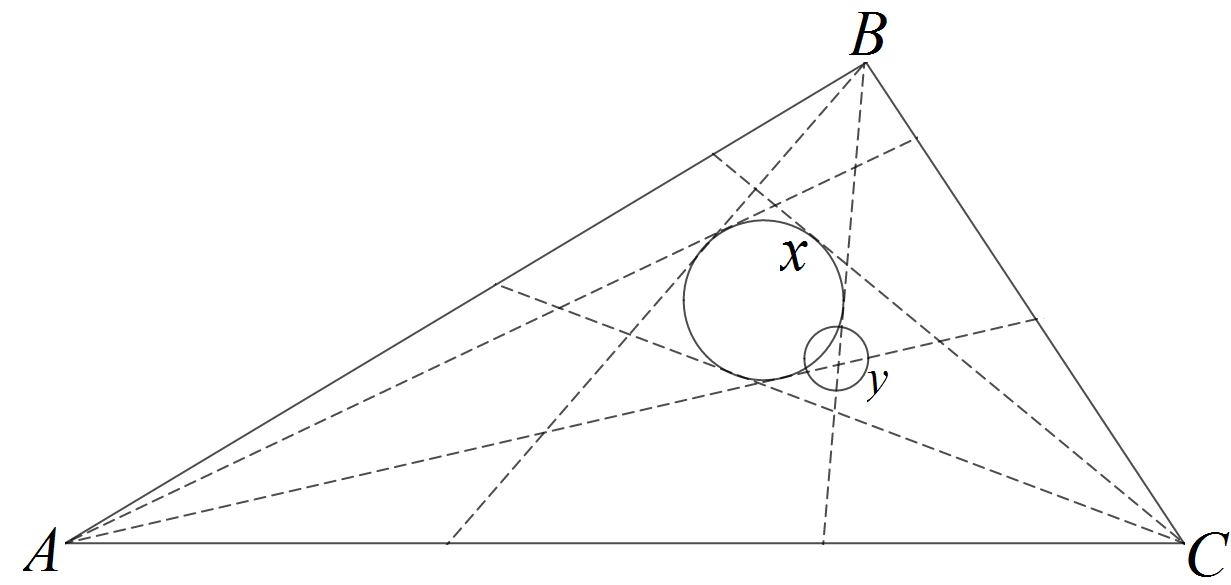}  
\caption{$S_{33}$}
  \label{fig:S33}
  \end{figure}
   \begin{figure}[H] 
\includegraphics[width=0.3\textwidth]{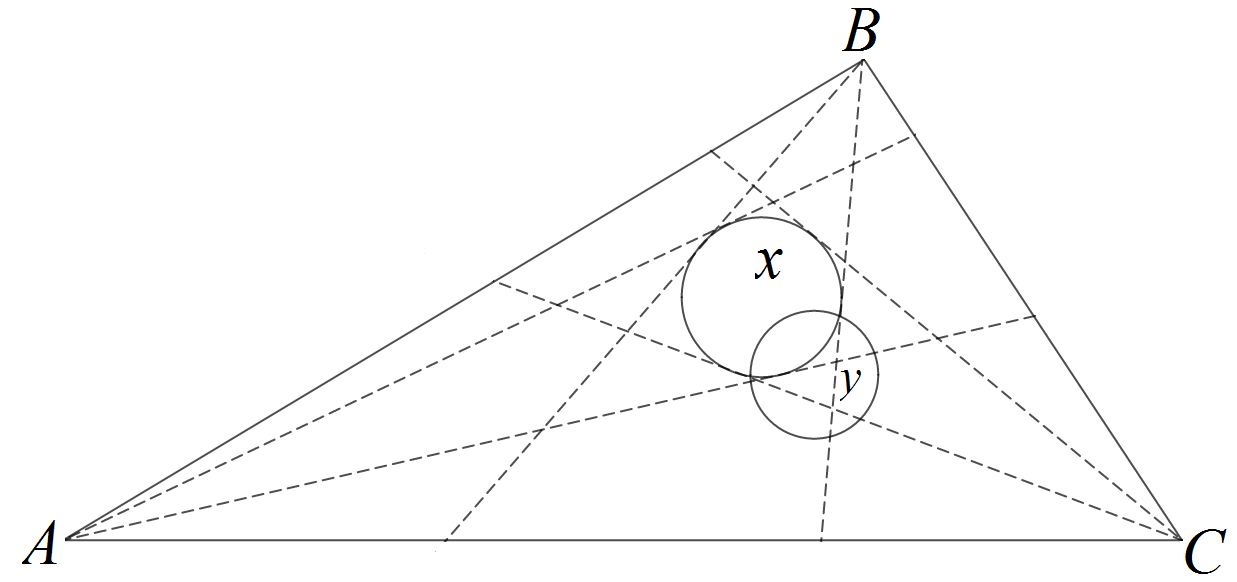}  
\caption{$S_{37}$}
  \label{fig:S37}
  \end{figure}

    
\end{multicols}

\begin{thebibliography}{9}
\bibitem{Adaricheva} 
K.~Adaricheva,
\textit{Representing Finite Convex Geometries by Relatively Convex Sets}.
Europ. J. of Combinatorics 37 (2014), 68-78. 

\bibitem{Adar2004} 
K.~Adaricheva,
\textit{Join-semidistributive Lattices of Relatively Convex Sets}. Contributions to General Algebra 14, Proceedings of the Olomouc Conference 2002 (AAA 64) and the Potsdam conference 2003 (AAA 65), Verlag Johannes Heyn, Klagenfurt, 2004, 1-14.

\bibitem{Adar2007} K.~Adaricheva, \emph{Realization of abstract convex geometries by point configurations. Part II}, preprint, 2007.

\bibitem{CzedliComm}
K.~Adaricheva, Private communication with G.~Cz\'edli, January 2013.

\bibitem{AdarGT} 
K.V.~Adaricheva, V.A.~Gorbunov and V.I.~Tumanov, 
\textit{Join Semidistributive Lattices and Convex Geometries}.
Advances in Mathematics 173 (2003), 1-49.

\bibitem{AN16} K.~Adaricheva and J.B.~Nation, \textit{ Bases of closure systems}, in Lattice Theory: Special Topics in Applications: Volume 2 (G.~Gr\"atzer and F.~Wehrung eds.), Birkhauser 2016, ISBN-13: 978-3319442358

\bibitem{AdarWild} 
K.~Adaricheva and M.~Wild, 
\textit{Realization of Abstract Convex Geometries by Point Configurations}, Europ. J. of Combinatorics 31 (2010), 379-400. 

\bibitem{Czedli} 
G.~Cz\'edli,
\textit{Finite Convex Geometries of Circles}.  
Discrete Mathematics 330 (2014), 61-75. 


\bibitem{D2}
R.P.~Dilworth,
\emph{Lattices with unique irreducible decompositions},
Annals of Math. (2) \textbf{41} (1940), 771--777.

\bibitem{EdelJam} 
P.H.~Edelman and R.E.~Jamison,
\textit{The Theory of Convex Geometries}.  
Geom Dedicata 19 (1985), 247-274. 

\bibitem{EdLa}
P.H.~Edelman and D.G.~Larman, \emph{On characterizing collections arising from
$N$-gons in the plane}, Geom. Dedicata 33(1990), 83--89.

\bibitem{FJN} R.~Freese, J.~Je\v{z}ek and J.B.~Nation, Free lattices, Mathematical Surveys and Monographs \textbf{42}, American Mathematical Society, Providence, RI, 1995.

 \bibitem{Kashetal} 
K. Kashiwabara, M. Nakamura and Y. Okamoto,
\textit{The Affine Representation Theorem for Abstract Convex Geometries}.  
Computational Geometry 30 (2005), 129-144.

\bibitem{Mn}
N.E.~Mn\"{e}v, \emph{The universality theorem on the classification problem of configuration varieties and convex polytopes varieties}, in Topology and Geometry--Rohlin Seminar (O.Ya.Viro, editor), v.1346 of Lecture Notes in Math., Springer-Verlag, Berlin, 1988, 527--544.

\bibitem{Mo85}
B.~Monjardet,
\emph{A use for frequently rediscovering a concept},
Order \textbf{1} (1985), 415--417.

\bibitem{RichRog} 
 M. Richter and L.G. Rogers,
\textit{Embedding Convex Geometries and a Bound on Convex Dimension}.
2015 http://arxiv.org/abs/1502.01941.

\bibitem{WS04} F.~Wehrung and M.~Semenova, \emph{Sublattices of lattices of convex subsets of vector spaces}, Algebra and Logic \textbf{43} (2004), 145--161.

\bibitem{Wi16} M.~Wild, \emph{Joy of Implications, a.k.a. pure Horn formulas: mainly a survey}, Theoretical Computer Science, to appear, arxiv.org/pdf/1411.6432.pdf
\end{thebibliography}
\end{document}